\documentclass[preprint,12pt]{elsarticle}
 
\makeatletter
\def\ps@pprintTitle{%
 \let\@oddhead\@empty
 \let\@evenhead\@empty
 \def\@oddfoot{}%
 \let\@evenfoot\@oddfoot}
\makeatother

 \usepackage{amssymb}
 
 \usepackage{amsmath}
 \usepackage{enumitem}
 
 \usepackage{amsthm}

 \usepackage{tikz} 
 \usepackage{tikz-cd}
  \usetikzlibrary{arrows.meta,bending,matrix} 
 
 \usepackage{subcaption}

 \usepackage{todonotes}
 \usepackage{soul}
 \usepackage{xspace}
 \usepackage{xcolor}

\theoremstyle{definition}
\newtheorem{defn}{Definition}
\usepackage{tikz-cd}
\newtheorem{thm}{Theorem}
\theoremstyle{definition} % Cambia el estilo de los corolarios
\usepackage{comment}

\newtheorem{lem}{Lemma}

\newcommand{\stopmin}{\sigma_{R}}
\newcommand{\stopmax}{\sigma_{L}}
\newcommand{\stopmisub}{{R}}
\newcommand{\stopmasub}{{L}}
\begin{document}
	\begin{frontmatter}

		\title{A characterization of interval nest digraphs}
		
		%% use optional labels to link authors explicitly to addresses:
		%% \author[label1,label2]{}
		%% \affiliation[label1]{organization={},
			%%             addressline={},
			%%             city={},
			%%             postcode={},
			%%             state={},
			%%             country={}}
		%%
		%% \affiliation[label2]{organization={},
			%%             addressline={},
			%%             city={},
			%%             postcode={},
			%%             state={},
			%%             country={}}
		
		\author[a,e]{Ayelén Alcantar } %% Author name
		\author[b,d]{Flavia Bonomo }
		\author[a,e,f]{Guillermo Durán }
		\author[g]{Nina Pardal}
		%% Author affiliation
		
		\affiliation[a]{organization={Universidad de Buenos Aires, Facultad de Ciencias Exactas y Naturales, Departamento de Matemática},
			city={Buenos Aires},
			country={Argentina}}
		\affiliation[b]{organization={Universidad de Buenos Aires, Facultad de Ciencias Exactas y Naturales, Departamento de Computación},
			city={Buenos Aires},
			country={Argentina}}
		%\affiliation[c]{organization={CONICET},%Department and Organization
			%country={Argentina}}
		\affiliation[e]{organization={CONICET-Universidad de Buenos Aires, Instituto de Cálculo (IC)},%Department and Organization
			%addressline={}, 
			city={Buenos Aires},
			%postcode={}, 
			%state={},
			country={Argentina}}
            \affiliation[d]{organization={CONICET-Universidad de Buenos Aires,  Instituto de
Investigación en Ciencias de la Computación (ICC)},%Department and Organization
			%addressline={}, 
			city={Buenos Aires},
			%postcode={}, 
			%state={},
			country={Argentina}}
		\affiliation[f]{organization={Universidad de Chile, Facultad de Ciencias Físicas y Matemáticas, Departamento de Ingeniería Industrial},%Department and Organization
			%addressline={}, 
			city={Santiago},
			%postcode={}, 
			%state={},
			country={Chile}
		}
		
		\affiliation[g]{organization={ School of Informatics, University of Edinburgh},%Department and Organization
			%addressline={}, 
			city={Edinburgh},
			%postcode={}, 
			%state={},
			country={United Kingdom}
		}
		%% Abstract
		\begin{abstract}

			A digraph consisting of a set of vertices $V$ and a set of arcs $E$ is called an \emph{interval digraph} if there exists a family of closed intervals $\{I_u,J_u\}_{u \in V}$ such that $uv$ is an arc if and only 
			if the intersection of $I_u$ and $J_v$ is non-empty.
			Interval digraphs naturally generalize interval graphs, by extending the classical interval intersection model to directed graphs.
			Several subclasses of interval digraphs have been studied in the literature---such as balanced, chronological and catch interval digraphs---each characterized by admitting interval representations that satisfy specific restrictions. Among these, \emph{interval nest digraphs} are the ones that admit an interval representation in which $J_u$ is contained in $I_u$ for all vertices $u$ of $V$. 
			
			In this work, we provide a complete characterization of interval nest digraphs in terms of vertex linear orderings with forbidden patterns, which we call \emph{nest orderings}. 
            This result completes the picture of vertex-ordering characterizations among the main subclasses of interval digraphs.
		\end{abstract}

		\begin{keyword}
			Interval digraphs, interval nest digraphs, forbidden patterns, nest ordering, structural characterization.

		\end{keyword}
		
	\end{frontmatter}
	
	\section{Introduction}
	Interval graphs were introduced by Haj\'os in 1957~\cite{Haj-int} as the intersection graphs of intervals of the real line, i.e., every vertex represents one interval and two vertices are adjacent if and only if the corresponding intervals intersect. Since their introduction, they
    are one of the most studied graph classes~\cite{BoothLueker1976,FulkersonGross1965}. 
	They arise naturally in the modeling of various real-world applications, particularly those involving time dependencies or other constraints that are linear in nature~\cite{Benzer1959,Cohen1979Interval,Huber2007Graphs}.
	Lekkerkerker and Boland~\cite{Lekkerkerker62} characterized interval graphs by forbidden induced subgraphs, showing that they are precisely those chordal graphs that contain no asteroidal triple.
	Interval digraphs extend the notion of interval graphs to the directed setting, allowing the representation of asymmetric relationships. Introduced by Das, Sen, Roy, and West in 1989~\cite{S-D-R-W}, the class of interval digraphs is of interest not only from a theoretical viewpoint in structural graph theory but also from a practical one, with applications to neural networks, biological systems, and communication networks~\cite{Ceyhan2011,dynamics2021}.
	For these reasons, various subclasses of interval digraphs have been proposed, each imposing additional structural restrictions to the intersection models that yield elegant characterizations and, in many cases, efficient algorithms.

	Formally, a digraph $D$ is an ordered pair $D = (V, E)$, where $V$ is the set of vertices and $E$ is the set of arcs.  
	An arc $uv$ denotes a directed edge from vertex $u$ to vertex $v$. We denote by $\mathcal{S}(E)$ the set of all symmetric arcs, that is, the set of arcs $uv \in E$ such that $vu \in E$.
	A vertex $u$ has a \textit{loop} if $uu\in E$. The \emph{out-neighborhood} of a vertex $u$ is defined as $N^+(u) = \{v \in V : uv \in E\}$. 
    
	A digraph $D = (V, E)$ is an \emph{interval digraph} if there exists a family of closed intervals $\{I_u, J_u\}_{u \in V}$ over the real line such that, for every $u, v \in V$, not necessarily distinct, $uv \in E$ if and only if $I_u \cap J_v \neq \emptyset$~\cite{S-D-R-W}. The intervals $I_u$ and $J_u$ are referred to as the \emph{origin} and \emph{destination} intervals, respectively, and the family $\{I_u, J_u\}_{u \in V}$ is called the \emph{(interval) (intersection) model} or \emph{(interval) representation}. Several restrictions imposed to the interval models give rise to well-studied subclasses of interval digraphs. The \textit{interval reflexive digraphs} form a large subclass with the constraint that every vertex has a loop, i.e., $J_u \cap I_u \neq \emptyset$ for all $u \in V$. 
    The \emph{interval point digraphs} are those in which $J_u$ is a point for all $u \in V$ (Das, Sen, Roy, and West~\cite{S-D-R-W}). 
	When the destination interval $J_u$ is an arbitrary point \emph{inside} $I_u$ for every $u \in V$, one obtains an \emph{interval catch digraph}, and if $J_u$ is defined as the midpoint of $I_u$ for every $u \in V$, the digraph is an \emph{interval balanced digraph} (Maehara~\cite{Mae-dig}).  If, instead, $J_u$ is defined to be the left endpoint of $I_u$ for every $u \in V$, the resulting structure is a \emph{chronological interval digraph} (Das, Francis, Hell, and Huang~\cite{SandipchronID}). 
	When $J_u$ is not necessarily a point but the intervals must satisfy that $I_u$ and $J_u$ have the same left endpoint for each vertex $u \in V$, the digraph is known as an \emph{adjusted interval digraph} (Feder, Hell, Huang, and Rafiey~\cite{Feder2009}).
	
	The present work focuses on the class of \emph{interval nest digraphs}, introduced by Prisner in~\cite{Prisner1994}. In this subclass, the representation $\{I_u, J_u\}_{u \in V}$ satisfies that each destination interval is contained in its corresponding origin interval, i.e., $J_u \subseteq I_u$ for all $u \in V$. In particular, interval nest digraphs are reflexive.
	
    Several of these subclasses admit elegant characterizations based on structural properties of vertex orderings. Most of them can be represented as a family of forbidden patterns. Forbidden patterns are usually depicted using solid and dashed arcs: for vertices $u,v$, a solid arc depicted from $u$ to $v$ indicates the presence of the arc $(u,v)$ in the digraph; a dashed arc from $u$ to $v$ indicates the absence of the arc $(u,v)$ in the digraph; the absence of a depicted arc from $u$ to $v$ indicates that the arc $(u,v)$ can be either present or not in the digraph.
    For instance, in~\cite{Feder2009}, Feder, Hell, Huang, and Rafiey characterize adjusted interval digraphs by means of a min-ordering: for any three vertices $u<v<w$, this ordering ensures that if $uw\in E$, then $uv\in E$, and if $wu\in E$, then $vu\in E$. Similarly, in~\cite{Mae-dig}, Maehara characterizes interval catch digraphs using a total order in which, for $u<v<w$, the presence of the arc $uw$ implies $uv\in E$, and the presence of $wu$ implies $wv\in E$. 
    See Figure~\ref{fig:adj-catch} for a graphical representation of the forbidden patterns. 

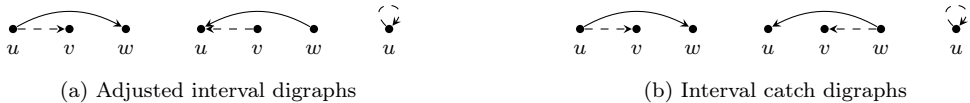
\begin{figure}[h!]
	\centering
	
	\begin{subfigure}{0.45\textwidth}
		\centering
		\begin{tikzpicture}[scale=0.5, ->, >=stealth, font=\scriptsize]
			\node (v0) at (-1.5,0) [circle, fill=black, inner sep=1.1pt, label=below:$u$] {};
			\node (v1) at (0,0)  [circle, fill=black, inner sep=1.1pt, label=below:$v$] {};
			\node (v2) at (1.5,0)  [circle, fill=black, inner sep=1.1pt, label=below:$w$] {};
			\draw (v0) to[bend left=30] (v2);
			\draw [dashed, ->] (v0) to (v1);
        
			\node (w0) at (3.5,0) [circle, fill=black, inner sep=1.1pt, label=below:$u$] {};
			\node (w1) at (5,0)  [circle, fill=black, inner sep=1.1pt, label=below:$v$] {};
			\node (w2) at (6.5,0)  [circle, fill=black, inner sep=1.1pt, label=below:$w$] {};
			\draw (w2) to[bend right=30] (w0);
			\draw [dashed, ->] (w1) to (w0);
        
            \node (z0) at (8.5,0) [circle, fill=black, inner sep=1.1pt, label=below:$u$] {};
            \draw [->, dashed] (z0) edge [out=135,in=45,looseness=15] (z0);

		\end{tikzpicture}
		\caption{Adjusted interval digraphs}
		\label{fig:adjusted}
	\end{subfigure}
	\hfill
	\begin{subfigure}{0.45\textwidth}
		\centering
		\begin{tikzpicture}[scale=0.5, ->, >=stealth, font=\scriptsize]
			\node (v0) at (-1.5,0) [circle, fill=black, inner sep=1.1pt, label=below:$u$] {};
			\node (v1) at (0,0)  [circle, fill=black, inner sep=1.1pt, label=below:$v$] {};
			\node (v2) at (1.5,0)  [circle, fill=black, inner sep=1.1pt, label=below:$w$] {};
			\draw (v0) to[bend left=30] (v2);
			\draw [dashed, ->] (v0) to (v1);
        
			\node (w0) at (3.5,0) [circle, fill=black, inner sep=1.1pt, label=below:$u$] {};
			\node (w1) at (5,0)  [circle, fill=black, inner sep=1.1pt, label=below:$v$] {};
			\node (w2) at (6.5,0)  [circle, fill=black, inner sep=1.1pt, label=below:$w$] {};
			\draw (w2) to[bend right=30] (w0);
			\draw [dashed, ->] (w2) to (w1);
        
            \node (z0) at (8.5,0) [circle, fill=black, inner sep=1.1pt, label=below:$u$] {};
            \draw [->, dashed] (z0) edge [out=135,in=45,looseness=15] (z0);

		\end{tikzpicture}
		\caption{Interval catch digraphs}
		\label{fig:catch}
	\end{subfigure}
	
	\caption{Forbidden patterns for adjusted interval digraphs (left) and interval catch digraphs (right), where $u < v < w$, extracted from the characterizations in~\cite{Feder2009} and~\cite{Mae-dig}, respectively.}
	\label{fig:adj-catch}
\end{figure}

The forbidden patterns for interval point digraphs (Figure~\ref{fig:point}) arise directly from their definition, as observed by Das, Sen, Roy, and West in~\cite{S-D-R-W}, by noticing that ordering the vertices according to their destination point (and breaking ties arbitrarily), the out-neighbors of any vertex have to be consecutive in the order, and that property ensures also the existence of a suitable model. 

\begin{figure}[h]
\centering
\begin{tikzpicture}[font=\scriptsize]
\matrix[row sep=3mm, column sep=5mm]{

\node{\begin{tikzpicture}[scale=0.5, ->, >=stealth]
\node (v0) at (-1.5,0) [circle, fill=black, inner sep=1.1pt, label=below:$u$] {};
\node (v1) at (0,0)  [circle, fill=black, inner sep=1.1pt, label=below:$v$] {};
\node (v2) at (1.5,0)  [circle, fill=black, inner sep=1.1pt, label=below:$w$] {};
\node (v3) at (3,0)  [circle, fill=black, inner sep=1.1pt, label=below:$z$] {};
\draw (v0) to[bend left=38] (v3);
\draw [dashed, ->] (v0) to[bend left=30] (v2);
\draw (v0) to (v1);
\node at (0.6,-1.4) {(p)};
\end{tikzpicture}
}; &

\node{
\begin{tikzpicture}[scale=0.5, ->, >=stealth]
\node (v0) at (-1.5,0) [circle, fill=black, inner sep=1.1pt, label=below:$u$] {};
\node (v1) at (0,0)  [circle, fill=black, inner sep=1.1pt, label=below:$v$] {};
\node (v2) at (1.5,0)  [circle, fill=black, inner sep=1.1pt, label=below:$w$] {};
\node (v3) at (3,0)  [circle, fill=black, inner sep=1.1pt, label=below:$z$] {};
\phantom{\draw (v0) to[bend left=38] (v3);}
\draw (v1) to[bend left=38] (v3);
\draw (v1) to  (v0);
\draw [dashed, ->] (v1) to (v2);
\node at (0.6,-1.4) {(q)};
\end{tikzpicture}
}; & 

\node{
\begin{tikzpicture}[scale=0.5, ->, >=stealth]
\node (v0) at (-1.5,0) [circle, fill=black, inner sep=1.1pt, label=below:$u$] {};
\node (v1) at (0,0)  [circle, fill=black, inner sep=1.1pt, label=below:$v$] {};
\node (v2) at (1.5,0)  [circle, fill=black, inner sep=1.1pt, label=below:$w$] {};
\node (v3) at (3,0)  [circle, fill=black, inner sep=1.1pt, label=below:$z$] {};
\phantom{\draw (v0) to[bend left=38] (v3);}
\draw (v2) to[bend right=38] (v0);
\draw [dashed, ->] (v2) to  (v1);
\draw  (v2) to  (v3);
\node at (0.6,-1.4) {(r)};
\end{tikzpicture}
}; & 

\node{
\begin{tikzpicture}[scale=0.5, ->, >=stealth]
\node (v0) at (-1.5,0) [circle, fill=black, inner sep=1.1pt, label=below:$u$] {};
\node (v1) at (0,0)  [circle, fill=black, inner sep=1.1pt, label=below:$v$] {};
\node (v2) at (1.5,0)  [circle, fill=black, inner sep=1.1pt, label=below:$w$] {};
\node (v3) at (3,0)  [circle, fill=black, inner sep=1.1pt, label=below:$z$] {};
\draw (v3) to[bend right=38] (v0);
\draw [dashed, ->] (v3) to[bend right=30] (v1);
\draw (v3) to (v2);
\node at (0.6,-1.4) {(s)};
\end{tikzpicture}
}; \\
};
\end{tikzpicture}

\caption{Forbidden patterns characterizing interval point digraphs, where possibly $u= v$ in~(p), $v = w$ in~(q) and~(r), and $w=z$ in~(s)~\cite{S-D-R-W}.}
\label{fig:point}
\end{figure}
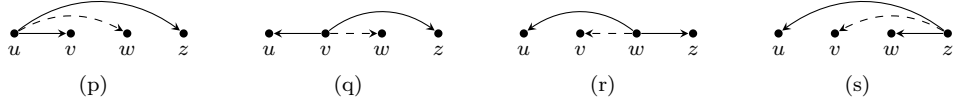

    In~\cite{sanchita-ghos}, Paul and Ghosh characterize the class of balanced interval digraphs (also referred to as \emph{central interval catch digraphs}) via an ordering $v_1, \ldots, v_n$ that, in addition to satisfying the general interval catch digraph conditions defined above, ensures that for any $i<j$, either $i_1 \le j_1$ or $i_2 \le j_2$, where $i_1$ and $i_2$ denote, respectively, the smallest and largest indices such that $i_1=i$ or $v_i v_{i_1}\in E$, and $i_2=i$ or $v_i v_{i_2}\in E$, for each $i=1,2,\ldots,n$. See Figure~\ref{fig:balanced} for a equivalent representation of these properties as forbidden patterns. 

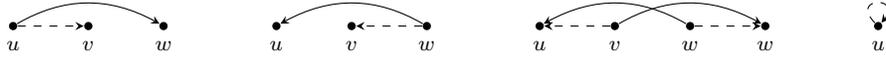
\begin{figure}[h!]
		\centering
		\begin{tikzpicture}[scale=0.5, ->, >=stealth, font=\scriptsize]
			\node (v0) at (-2,0) [circle, fill=black, inner sep=1.1pt, label=below:$u$] {};
			\node (v1) at (0,0)  [circle, fill=black, inner sep=1.1pt, label=below:$v$] {};
			\node (v2) at (2,0)  [circle, fill=black, inner sep=1.1pt, label=below:$w$] {};
			\draw (v0) to[bend left=30] (v2);
			\draw [dashed, ->] (v0) to (v1);
        
			\node (w0) at (5,0) [circle, fill=black, inner sep=1.1pt, label=below:$u$] {};
			\node (w1) at (7,0)  [circle, fill=black, inner sep=1.1pt, label=below:$v$] {};
			\node (w2) at (9,0)  [circle, fill=black, inner sep=1.1pt, label=below:$w$] {};
			\draw (w2) to[bend right=30] (w0);
			\draw [dashed, ->] (w2) to (w1);

			\node (x0) at (12,0) [circle, fill=black, inner sep=1.1pt, label=below:$u$] {};
			\node (x1) at (14,0)  [circle, fill=black, inner sep=1.1pt, label=below:$v$] {};
			\node (x2) at (16,0)  [circle, fill=black, inner sep=1.1pt, label=below:$w$] {};
			\node (x3) at (18,0)  [circle, fill=black, inner sep=1.1pt, label=below:$w$] {};
			\draw (x1) to[bend left=30] (x3);
			\draw [dashed, ->] (x2) to (x3);
            \draw (x2) to[bend right=30] (x0);
			\draw [dashed, ->] (x1) to (x0);
            
            \node (z0) at (21,0) [circle, fill=black, inner sep=1.1pt, label=below:$u$] {};
            \draw [->, dashed] (z0) edge [out=135,in=45,looseness=15] (z0);

		\end{tikzpicture}
	
	\caption{Forbidden patterns for balanced interval (catch) digraphs, for $u < v < w < z$, extracted from the characterization in~\cite{sanchita-ghos}.}
	\label{fig:balanced}
\end{figure}

    In~\cite{SandipchronID}, Das, Francis, Hell, and Huang characterize chronological interval digraphs by means of a linear ordering of their vertices satisfying four properties (besides being reflexive): 
    $vu \not \in E - \mathcal{S}(E)$; $uw \in \mathcal{S}(E)$ implies $uv,vw \in \mathcal{S}(E)$; $uw \in E - \mathcal{S}(E)$ implies either $uv \in E - \mathcal{S}(E)$ or both $uv \in \mathcal{S}(E)$ and $vw \in E - \mathcal{S}(E)$; and $uw \not \in E$ implies either $uv \not \in E$ or $vw \not \in \mathcal{S}(E)$.
    These properties can be translated as the forbidden patterns in Figure~\ref{fig:chron}.

\begin{figure}[h]
\centering
\begin{tikzpicture}[font=\scriptsize]
\matrix[row sep=3mm, column sep=6mm]{

\node{\begin{tikzpicture}[scale=0.5, ->, >=stealth]
\node (z0) at (-4,0) [circle, fill=black, inner sep=1.1pt, label=below:$u$] {};
            \draw [->, dashed] (z0) edge [out=135,in=45,looseness=15] (z0);

\node (v0) at (-2,0) [circle, fill=black, inner sep=1.1pt, label=below:$u$] {};
\node (v1) at (0,0)  [circle, fill=black, inner sep=1.1pt, label=below:$v$] {};
\draw (v1) to[bend right=38] (v0);
\draw [dashed, ->] (v0) to (v1);
\end{tikzpicture}
}; &

\node{
\begin{tikzpicture}[scale=0.5, ->, >=stealth]
\node (v0) at (-2,0) [circle, fill=black, inner sep=1.1pt, label=below:$u$] {};
\node (v1) at (0,0)  [circle, fill=black, inner sep=1.1pt, label=below:$v$] {};
\node (v2) at (2,0)  [circle, fill=black, inner sep=1.1pt, label=below:$w$] {};
\draw (v0) to[bend left=38] (v2);
\draw [dashed, ->] (v0) to (v1);
\end{tikzpicture}
}; &

\node{
\begin{tikzpicture}[scale=0.5, ->, >=stealth]
\node (v0) at (-2,0) [circle, fill=black, inner sep=1.1pt, label=below:$u$] {};
\node (v1) at (0,0)  [circle, fill=black, inner sep=1.1pt, label=below:$v$] {};
\node (v2) at (2,0)  [circle, fill=black, inner sep=1.1pt, label=below:$w$] {};
\draw (v2) to[bend right=38] (v0);
\draw [dashed, ->] (v1) to (v0);
\end{tikzpicture}
}; & 

\node{
\begin{tikzpicture}[scale=0.5, ->, >=stealth]
\node (v0) at (-2,0) [circle, fill=black, inner sep=1.1pt, label=below:$u$] {};
\node (v1) at (0,0)  [circle, fill=black, inner sep=1.1pt, label=below:$v$] {};
\node (v2) at (2,0)  [circle, fill=black, inner sep=1.1pt, label=below:$w$] {};
\draw (v2) to[bend right=38] (v0);
\draw [dashed, ->] (v2) to (v1);
\end{tikzpicture}
}; \\

%%%%

\node{
\begin{tikzpicture}[scale=0.5, ->, >=stealth]
\node (v0) at (-2,0) [circle, fill=black, inner sep=1.1pt, label=below:$u$] {};
\node (v1) at (0,0)  [circle, fill=black, inner sep=1.1pt, label=below:$v$] {};
\node (v2) at (2,0)  [circle, fill=black, inner sep=1.1pt, label=below:$w$] {};
\draw (v2) to[bend right=38] (v0);
\draw [dashed, ->] (v1) to (v2);
\end{tikzpicture}
}; & 

\node{
\begin{tikzpicture}[scale=0.5, ->, >=stealth]
\node (v0) at (-2,0) [circle, fill=black, inner sep=1.1pt, label=below:$u$] {};
\node (v1) at (0,0)  [circle, fill=black, inner sep=1.1pt, label=below:$v$] {};
\node (v2) at (2,0)  [circle, fill=black, inner sep=1.1pt, label=below:$w$] {};
\draw (v0) to[bend left=38] (v2);
\draw [dashed, ->] (v1) to (v2);
\draw (v1) to (v0);
\end{tikzpicture}
}; &

%%%%

\node{
\begin{tikzpicture}[scale=0.5, ->, >=stealth]
\node (v0) at (-2,0) [circle, fill=black, inner sep=1.1pt, label=below:$u$] {};
\node (v1) at (0,0)  [circle, fill=black, inner sep=1.1pt, label=below:$v$] {};
\node (v2) at (2,0)  [circle, fill=black, inner sep=1.1pt, label=below:$w$] {};
\draw (v0) to[bend left=38] (v2);
\draw [dashed, ->] (v2) to[bend left=32] (v0);
\draw (v1) to (v0);
\draw (v2) to (v1);
\end{tikzpicture}
}; &

\node{
\begin{tikzpicture}[scale=0.5, ->, >=stealth]
\node (v0) at (-2,0) [circle, fill=black, inner sep=1.1pt, label=below:$u$] {};
\node (v1) at (0,0)  [circle, fill=black, inner sep=1.1pt, label=below:$v$] {};
\node (v2) at (2,0)  [circle, fill=black, inner sep=1.1pt, label=below:$w$] {};
\draw [dashed, ->] (v0) to[bend left=38] (v2);
\draw (v0) to (v1);
\draw (v1) to (v2);
\draw (v2) to (v1);
\end{tikzpicture}
}; \\
};
\end{tikzpicture}

\caption{Forbidden patterns for chronological interval digraphs, for $u < v < w$, extracted from the characterization in~\cite{SandipchronID}. }
\label{fig:chron}
\end{figure}
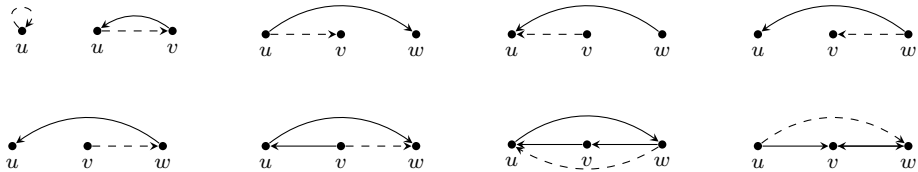
%\vspace{-3mm}
    
    Furthermore, in~\cite{Francis2021Kernel}, Francis, Hell, and Jacob prove that a digraph is a reflexive interval digraph if and only if its vertex set admits a linear ordering in which none of the forbidden structural patterns illustrated in Figure~\ref{fig:HFJ-patrones-reflexive} occur.
%    \vspace{-4mm}

\begin{figure}[h!]
\centering
\begin{tikzpicture}[font=\scriptsize]
\matrix[row sep=3mm, column sep=10mm]{

\node{\begin{tikzpicture}[scale=0.5, ->, >=stealth]
\node (v0) at (-2,0) [circle, fill=black, inner sep=1.1pt, label=below:$u$] {};
\node (v1) at (0,0)  [circle, fill=black, inner sep=1.1pt, label=below:$v$] {};
\node (v2) at (2,0)  [circle, fill=black, inner sep=1.1pt, label=below:$w$] {};
\node (v3) at (4,0)  [circle, fill=black, inner sep=1.1pt, label=below:$z$] {};
\draw (v0) to[bend left=38] (v3);
\draw [dashed, ->] (v0) to (v1);
\draw [dashed, ->] (v2) to (v3);
\node at (1.1,-1.4) {(a)};
\end{tikzpicture}
}; &

\node{
\begin{tikzpicture}[scale=0.5, ->, >=stealth]
\node (v0) at (-2,0) [circle, fill=black, inner sep=1.1pt, label=below:$u$] {};
\node (v1) at (0,0)  [circle, fill=black, inner sep=1.1pt, label=below:$v$] {};
\node (v2) at (2,0)  [circle, fill=black, inner sep=1.1pt, label=below:$w$] {};
\node (v3) at (4,0)  [circle, fill=black, inner sep=1.1pt, label=below:$z$] {};
\draw (v0) to[bend left=38] (v3);
\draw (v1) to  (v2);
\draw [dashed, ->] (v0) to[bend right=38](v2);
\draw [dashed, ->] (v1) to[bend right=38](v3);
\node at (1.1,-1.4) {(b)};
\end{tikzpicture}
}; & 

\node{
\begin{tikzpicture}[scale=0.5, ->, >=stealth]
\node (v0) at (-2,0) [circle, fill=black, inner sep=1.1pt, label=below:$u$] {};
\node (v1) at (0,0)  [circle, fill=black, inner sep=1.1pt, label=below:$v$] {};
\node (v2) at (2,0)  [circle, fill=black, inner sep=1.1pt, label=below:$w$] {};
\node (v3) at (4,0)  [circle, fill=black, inner sep=1.1pt, label=below:$z$] {};
\draw (v1) to[bend left=38] (v3);
\draw (v0) to[bend left=38] (v2);
\draw [dashed, ->] (v0) to[bend right=30] (v3);
\draw [dashed, ->] (v1) to[bend right=38] (v2);
\node at (1.1,-1.4) {(c)};
\end{tikzpicture}
}; \\

%%%%

\node{\begin{tikzpicture}[scale=0.5, ->, >=stealth]
\node (v0) at (-2,0) [circle, fill=black, inner sep=1.1pt, label=below:$u$] {};
\node (v1) at (0,0)  [circle, fill=black, inner sep=1.1pt, label=below:$v$] {};
\node (v2) at (2,0)  [circle, fill=black, inner sep=1.1pt, label=below:$w$] {};
\node (v3) at (4,0)  [circle, fill=black, inner sep=1.1pt, label=below:$z$] {};
\draw (v3) to[bend right=38] (v0);
\draw [dashed, ->] (v1) to (v0);
\draw [dashed, ->] (v3) to (v2);
\node at (1.1,-1.4) {(d)};
\end{tikzpicture}
}; &

\node{
\begin{tikzpicture}[scale=0.5, ->, >=stealth]
\node (v0) at (-2,0) [circle, fill=black, inner sep=1.1pt, label=below:$u$] {};
\node (v1) at (0,0)  [circle, fill=black, inner sep=1.1pt, label=below:$v$] {};
\node (v2) at (2,0)  [circle, fill=black, inner sep=1.1pt, label=below:$w$] {};
\node (v3) at (4,0)  [circle, fill=black, inner sep=1.1pt, label=below:$z$] {};
\draw (v3) to[bend right=38] (v0);
\draw (v2) to  (v1);
\draw [dashed, ->] (v2) to[bend left=38](v0);
\draw [dashed, ->] (v3) to[bend left=38](v1);
\node at (1.1,-1.4) {(e)};
\end{tikzpicture}
}; & 

\node{
\begin{tikzpicture}[scale=0.5, ->, >=stealth]
\node (v0) at (-2,0) [circle, fill=black, inner sep=1.1pt, label=below:$u$] {};
\node (v1) at (0,0)  [circle, fill=black, inner sep=1.1pt, label=below:$v$] {};
\node (v2) at (2,0)  [circle, fill=black, inner sep=1.1pt, label=below:$w$] {};
\node (v3) at (4,0)  [circle, fill=black, inner sep=1.1pt, label=below:$z$] {};
\draw (v3) to[bend right=38] (v1);
\draw (v2) to[bend right=38] (v0);
\draw [dashed, ->] (v3) to[bend left=30] (v0);
\draw [dashed, ->] (v2) to[bend left=38] (v1);
\node at (1.1,-1.4) {(f)};
\end{tikzpicture}
}; \\
};
\end{tikzpicture}

\caption{Forbidden patterns for reflexive interval digraphs, where possibly $v = w$ in (a), (b), (d) and (e) 
(figure reproduced from~\cite{Francis2021Kernel}). A dashed loop is the seventh forbidden pattern. }
\label{fig:HFJ-patrones-reflexive}
\end{figure}
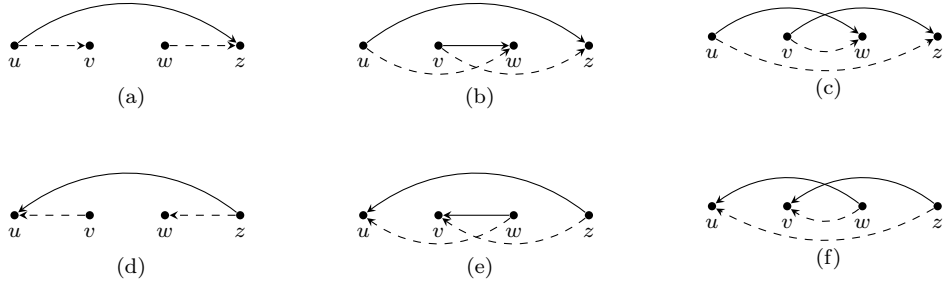
%\vspace{-3mm}

These results highlight a recurring theme: the structural constraints imposed by interval representations can often be captured by suitable vertex orderings that either avoid specific adjacency patterns or satisfy certain forbidden substructure properties. While classes such as adjusted, catch, point, balanced, chronological, and reflexive interval digraphs admit characterizations in terms of vertex orderings that forbid particular configurations---such as the patterns illustrated in Figures~\ref{fig:adj-catch} to \ref{fig:HFJ-patrones-reflexive}---no such vertex-ordering characterization has been established for the class of interval nest digraphs. This class is of significant interest due to its structural and algorithmic properties: Prisner demonstrated that interval nest digraphs are kernel-perfect and that their underlying graphs are weakly triangulated~\cite{Prisner1994}. Furthermore, he showed that if a nest representation is known, several problems that are generally NP-hard, including clique, chromatic number, independent set and kernel, can be solved in polynomial time. Although these digraphs were introduced and characterized by Prisner in terms of their interval representation, we have been unable to locate the characterization mentioned in the reference often cited as ``E. Prisner, \emph{Kernels in interval digraphs}, Inform. Proc. Lett., to appear''~\cite{Sen1995}. To the best of our knowledge, no characterization of this class in terms of a pattern-avoiding vertex ordering currently appears in the literature. 

In this work, we fill this gap by providing such a characterization in terms of a vertex ordering, which we call a \emph{nest ordering} and define through three structural properties. Specifically, we show that a finite digraph $D$ is an interval nest digraph if and only if it is reflexive and admits a nest ordering. Moreover, we prove that being reflexive and admitting a nest ordering is equivalent to admitting a linear vertex ordering that avoids the forbidden patterns shown in Figure~\ref{fig:fbd-patt}. Throughout this work, we focus on reflexive digraphs without multiple arcs; that is, every vertex has a loop and no ordered pair of vertices is joined by more than one arc.

\section{Characterization of interval nest digraphs}\label{sec:char}

In this section, we introduce a family of orderings of the vertices of a digraph and show that the existence of an ordering in this family characterizes interval nest digraphs.
Recall that $\mathcal{S}(E)$ is the set of all symmetric arcs.

\begin{defn}
Given a reflexive digraph \( D=(V,E) \), a total ordering $(\leq)$ of $V$ is called a \textit{nest ordering} if,  for all \( u \leq v \leq w \leq z \), all of the following hold:
\begin{enumerate}[label=(\roman*), ref=(\roman*), itemsep=1pt, topsep=2.5pt]
	\item \label{item:i}If \( uw, uz \in E \), then either \( uv \in E \) or \( vw,vz,wz \in \mathcal{S}(E) \). If \( zu, zv \in E \), then either \( zw \in E \) or \( wu,vu,wv \in \mathcal{S}(E) \).
	\item \label{item:ii} If \( uz, vw \in E \), then either \( uw \in E \) or \( vz \in E \). If \( zu, wv \in E \), then either \( zv \in E \) or \( wu \in E \).
	\item \label{item:iii}If \( uw, vz \in E \), then either \( vw \in E \) or \( uz \in E \). If \( zv, wu \in E \), then either \( zu \in E \) or \( wv \in E \).
\end{enumerate}
\end{defn} 
Notice that in the properties above, vertices in the sequence \( u \leq v \leq w \leq z \) are allowed to repeat. 

The first statements of properties~\ref{item:i}, \ref{item:ii}, and~~\ref{item:iii} are depicted in Figure~\ref{fig:fwd-patt}, while the full list of forbidden patterns can be found in Figure~\ref{fig:fbd-patt}.

%\vspace{-4mm}
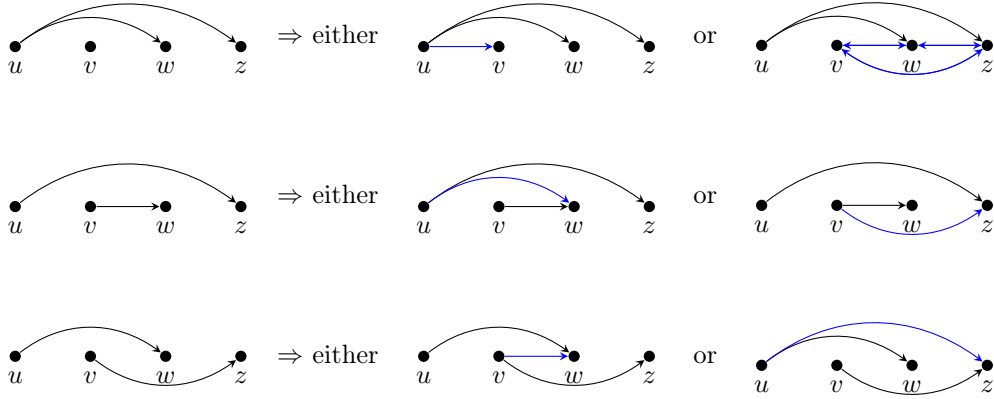
\begin{figure}[h]
\centering

\begin{tikzpicture}[font=\footnotesize]
\matrix[row sep=6mm, column sep=0mm]{

\node{\begin{tikzpicture}[scale=0.5, ->, >=stealth]
	\node (v0) at (-2,0) [circle, fill=black, inner sep=1.5pt, label=below:$u$] {};
	\node (v1) at (0,0) [circle, fill=black, inner sep=1.5pt, label=below:$v$] {};
	\node (v2) at (2,0) [circle, fill=black, inner sep=1.5pt, label=below:$w$] {};
	\node (v3) at (4,0) [circle, fill=black, inner sep=1.5pt, label=below:$z$] {};
	\draw (v0) to[bend left=38] (v3);
	\draw (v0)  to[bend left=38] (v2);
\end{tikzpicture}
}; & \node{$\Rightarrow$ either\phantom{j}}; &

\node{
\begin{tikzpicture}[scale=0.5, ->, >=stealth]
	\node (v0) at (-2,0) [circle, fill=black, inner sep=1.5pt, label=below:$u$] {};
	\node (v1) at (0,0) [circle, fill=black, inner sep=1.5pt, label=below:$v$] {};
	\node (v2) at (2,0) [circle, fill=black, inner sep=1.5pt, label=below:$w$] {};
	\node (v3) at (4,0) [circle, fill=black, inner sep=1.5pt, label=below:$z$] {};
	\draw (v0) to[bend left=38] (v3);
	\draw (v0)  to[bend left=38] (v2);
	\draw [color=blue] (v0) to (v1);
\end{tikzpicture}
}; & \node{\phantom{j}or\phantom{l}}; &

\node{
\begin{tikzpicture}[scale=0.5, ->, >=stealth]
	\node (v0) at (-2,0) [circle, fill=black, inner sep=1.5pt, label=below:$u$] {};
	\node (v1) at (0,0) [circle, fill=black, inner sep=1.5pt, label=below:$v$] {};
	\node (v2) at (2,0) [circle, fill=black, inner sep=1.5pt, label=below:$w$] {};
	\node (v3) at (4,0) [circle, fill=black, inner sep=1.5pt, label=below:$z$] {};
	\draw (v0) to[bend left=38] (v3);
	\draw (v0)  to[bend left=38] (v2);
	\draw [color=blue] (v1) to (v2);
    \draw [color=blue] (v2) to (v1);
    \draw [color=blue] (v2) to (v3);
    \draw [color=blue] (v3) to (v2);
    \draw [color=blue] (v3) to[bend left=38]  (v1);
    \draw [color=blue] (v1) to[bend right=38]  (v3);
\end{tikzpicture}
}; \\

%%%%

\node{
\begin{tikzpicture}[scale=0.5, ->, >=stealth]
	\node (v0) at (-2,0) [circle, fill=black, inner sep=1.5pt, label=below:$u$] {};
	\node (v1) at (0,0) [circle, fill=black, inner sep=1.5pt, label=below:$v$] {};
	\node (v2) at (2,0) [circle, fill=black, inner sep=1.5pt, label=below:$w$] {};
	\node (v3) at (4,0) [circle, fill=black, inner sep=1.5pt, label=below:$z$] {};
    \draw (v0) to[bend left=38]  (v3);
    \draw (v1) to(v2);
\end{tikzpicture}
}; & \node{$\Rightarrow$ either\phantom{j}}; &

\node{
\begin{tikzpicture}[scale=0.5, ->, >=stealth]
	\node (v0) at (-2,0) [circle, fill=black, inner sep=1.5pt, label=below:$u$] {};
	\node (v1) at (0,0) [circle, fill=black, inner sep=1.5pt, label=below:$v$] {};
	\node (v2) at (2,0) [circle, fill=black, inner sep=1.5pt, label=below:$w$] {};
	\node (v3) at (4,0) [circle, fill=black, inner sep=1.5pt, label=below:$z$] {};
    \draw (v0) to[bend left=38]  (v3);
    \draw (v1) to(v2);
	\draw [color=blue] (v0)  to[bend left=38] (v2);
\end{tikzpicture}
}; & \node{\phantom{j}or\phantom{l}}; &

\node{
\begin{tikzpicture}[scale=0.5, ->, >=stealth]
	\node (v0) at (-2,0) [circle, fill=black, inner sep=1.5pt, label=below:$u$] {};
	\node (v1) at (0,0) [circle, fill=black, inner sep=1.5pt, label=below:$v$] {};
	\node (v2) at (2,0) [circle, fill=black, inner sep=1.5pt, label=below:$w$] {};
	\node (v3) at (4,0) [circle, fill=black, inner sep=1.5pt, label=below:$z$] {};
    \draw (v0) to[bend left=38]  (v3);
    \draw (v1) to(v2);
  	\draw [color=blue] (v1)  to[bend right=38] (v3);
\end{tikzpicture}
}; \\

%%%%%

\node{
\begin{tikzpicture}[scale=0.5, ->, >=stealth]
	\node (v0) at (-2,0) [circle, fill=black, inner sep=1.5pt, label=below:$u$] {};
	\node (v1) at (0,0) [circle, fill=black, inner sep=1.5pt, label=below:$v$] {};
	\node (v2) at (2,0) [circle, fill=black, inner sep=1.5pt, label=below:$w$] {};
	\node (v3) at (4,0) [circle, fill=black, inner sep=1.5pt, label=below:$z$] {};
	\draw (v0) to[bend left=38] (v2);
	\draw (v1)  to[bend right=38] (v3);
\end{tikzpicture}
}; & \node{$\Rightarrow$ either\phantom{j}}; &

\node{
\begin{tikzpicture}[scale=0.5, ->, >=stealth]
	\node (v0) at (-2,0) [circle, fill=black, inner sep=1.5pt, label=below:$u$] {};
	\node (v1) at (0,0) [circle, fill=black, inner sep=1.5pt, label=below:$v$] {};
	\node (v2) at (2,0) [circle, fill=black, inner sep=1.5pt, label=below:$w$] {};
	\node (v3) at (4,0) [circle, fill=black, inner sep=1.5pt, label=below:$z$] {};
	\draw (v0) to[bend left=38] (v2);
	\draw (v1)  to[bend right=38] (v3);
    \draw [color=blue] (v1) to  (v2);
\end{tikzpicture}
}; 

& \node{\phantom{j}or\phantom{l}}; &

\node{
\begin{tikzpicture}[scale=0.5, ->, >=stealth]
	\node (v0) at (-2,0) [circle, fill=black, inner sep=1.5pt, label=below:$u$] {};
	\node (v1) at (0,0) [circle, fill=black, inner sep=1.5pt, label=below:$v$] {};
	\node (v2) at (2,0) [circle, fill=black, inner sep=1.5pt, label=below:$w$] {};
	\node (v3) at (4,0) [circle, fill=black, inner sep=1.5pt, label=below:$z$] {};
	\draw (v0) to[bend left=38] (v2);
	\draw (v1)  to[bend right=38] (v3);
    \draw [color=blue] (v0) to[bend left=38]  (v3);
\end{tikzpicture}
}; \\
};
\end{tikzpicture}
\caption{Forward patterns induced by a nest ordering, where $u \leq v \leq w \leq z$. Whenever the black arcs are present, at least one of the two column patterns must occur.}\label{fig:fwd-patt}
\end{figure}
%\vspace{-3mm}

The following theorem is the main result of this work.

\begin{thm}\label{characterization}
	Let \( D=(V,E) \) be a finite digraph. Then \( D \) is an interval nest digraph if and only if $D$ is reflexive and there exists a nest ordering on \( V \).	
\end{thm}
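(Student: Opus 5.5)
The plan is to prove the two directions separately. Necessity (a nest model yields a nest ordering) should be a direct geometric check. Sufficiency (a nest ordering yields a nest model) needs an explicit construction, and that is where the difficulty lies.

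\emph{Necessity.} Let $\{I_u,J_u\}_{u\in V}$ be a nest representation. Reflexivity is immediate, since $J_u\subseteq I_u$ gives $I_u\cap J_u\neq\emptyset$. Order $V$ by the left endpoints $\ell(J_u)$ of the destination intervals, breaking ties by right endpoints and then arbitrarily. Each of the six implications in the definition of a nest ordering is then verified by contradiction, assuming the conclusion fails. As a sample, take the first half of \ref{item:ii}: $uz,vw\in E$ with $u\le v\le w\le z$, and suppose $uw\notin E$. Since $\ell(J_w)\le \ell(J_z)$ and $I_u$ reaches $J_z$, the interval $I_u$ cannot lie entirely to the left of $J_w$. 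Hence $I_u$ starts to the right of $r(J_w)$. But then $\ell(J_u)\ge \ell(I_u)>r(J_w)\ge \ell(J_w)\ge \ell(J_u)$, a contradiction. The remaining cases are of the same flavour, always combining $J_x\subseteq I_x$ with the monotonicity of left endpoints. For \ref{item:i} the argument runs as follows. If $uv\notin E$ while $uw,uz\in E$, then $I_u$ lies strictly to the right of $J_v$. Since $\ell(J_u)\le \ell(J_v)$, this forces $J_v$ to be long and to overlap $J_w$, $J_z$ and their origin intervals, which yields the symmetric arcs $vw,vz,wz$. The mirror statements, which involve arcs pointing leftward, may need the tie-breaking rule, or they may force the choice of ordering key to be refined (for instance to the midpoint of $J_u$). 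I would settle this choice first, by testing all six implications against it.

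\emph{Sufficiency.} I would argue by induction on $|V|$: delete the last vertex $z$ of the nest ordering, observe that the restricted order is still a nest ordering of $D-z$ (the properties are hereditary), and extend a model of $D-z$. A cleaner alternative is a direct construction. Place vertex $v_i$ at integer position $i$, and let $J_{v_i}$ be the smallest interval containing $i$ and all positions $j$ such that $v_j$ is in the maximal run of vertices adjacent to $v_i$ by symmetric arcs that reaches $v_i$. Then let $I_{v_i}$ be the convex hull of $J_{v_i}$ together with the $J$'s of all out-neighbours of $v_i$. The roles of the three properties would be as follows.
\begin{itemize}
\item Property \ref{item:i} is exactly what makes a hole in the out-neighbourhood harmless. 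A non-neighbour $v$ lying between $u$ and out-neighbours $w,z$ must be symmetric with them, so $J_v$ can be shifted or shortened so that it sits outside $I_u$ without destroying the arcs $vw,vz,wz$.
\item Properties \ref{item:ii} and \ref{item:iii} guarantee that the hulls $I_u$ do not accidentally meet the destination interval of a non-neighbour on the far side.
\end{itemize}

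The main obstacle is making these choices simultaneously consistent, namely placing the $J$ intervals so that every non-arc $uv$ has $J_v$ outside $I_u$. I would try the induction first. Adding $z$ at the right end, the nest ordering conditions with $z$ as the last element should describe $N^+(z)$ and $N^-(z)$ as suitably "interval-like" sets. This would allow $J_z$ to be placed just to the right of the current model, or inside the region of the symmetric block containing $z$, and the $I$'s of the in-neighbours of $z$ to be stretched to reach $J_z$. Properties \ref{item:ii} and \ref{item:iii} would ensure that this stretching creates no unwanted arcs. Finally, finiteness lets us perturb endpoints so that all of them are distinct, which keeps the case analysis manageable.
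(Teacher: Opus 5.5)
\textbf{Sufficiency has a genuine gap.} Neither route you sketch for it is a proof, and the one concrete construction fails.

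Take $v_1<v_2<v_3$ with arcs $v_1v_3$, $v_2v_3$, $v_3v_2$ and loops. One checks directly that this is a nest ordering. In your direct construction, $I_{v_1}$ is the hull of $J_{v_1}\ni 1$ and $J_{v_3}\ni 3$, so it contains position $2\in J_{v_2}$. This creates the non-arc $v_1v_2$. No shifting or shortening of $J_{v_2}$ can help, because your definition forces $J_{v_2}$ to contain its own position. So property (i) does \emph{not} make holes harmless when $I_u$ is the hull of the out-neighbours' destination intervals.

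The inductive route is only a plan. You use nothing about the model of $D-z$ except its existence, and your extension recipe fails for arbitrary models. Take $b<a<z$ with $az$ as the only non-loop arc (again a nest ordering). The model $I_a=J_a=\{0\}$, $I_b=J_b=\{1\}$ of $D-z$ cannot be extended by placing $J_z$ to the right and stretching $I_a$ to it, since $I_a$ would then swallow $J_b$. You would need an inductive invariant tying the model to the ordering, and you would then still have to prove the extension step. You formulate neither.

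\textbf{The missing idea.} The paper's explicit model works the other way round from your hull. $I_x$ stops just before the nearest non-out-neighbours $\stopmax(x)$ and $\stopmin(x)$, shifted inward by $1/(2+|N^+_{L}(x)|)$ and $1/(2+|N^+_{R}(x)|)$. Out-neighbours lying beyond a stop are reached by stretching \emph{their} destination intervals: $J_y$ runs from the least $b_z$ over in-neighbours $z<y$ to the largest $a_z$ over in-neighbours $z>y$. With this model:
\begin{enumerate}
\item Property (i) is exactly what gives $J_x\subseteq I_x$; this is where your intuition about holes belongs.
\item Properties (ii) and (iii) show that vertices with a common stop have nested out-neighbourhoods beyond that stop. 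Hence the fractional offsets order the endpoints $b_x$ (and $a_x$) compatibly.
\item Properties (i)--(iii) together exclude spurious arcs.
\end{enumerate}

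\textbf{Necessity is fine, and easier than you fear.} Under the order by $\ell(J_u)$, your sample argument never uses $vw$. It actually shows that $uz\in E$ and $u\le w\le z$ imply $uw\in E$. This settles all three forward statements at once. In particular, the situation in your sketch of (i) never arises, since $\ell(I_u)\le \ell(J_u)\le \ell(J_v)$.

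The mirror statements follow by comparing right endpoints, and no tie-breaking is needed. For the mirror of (iii), one gets $r(J_u)\ge \ell(I_w)>r(J_v)\ge \ell(I_z)>r(J_u)$. For the mirror of (i), $zw\notin E$ forces $r(J_w)<\ell(I_z)$, hence $J_w\subseteq J_u\cap J_v$, which gives all the required symmetric arcs.

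The paper instead orders by arbitrary distinct points $p_v\in J_v$ and obtains the mirror statements by reflecting the model.
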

The proof of this theorem is rather involved and, for the sake of clarity, is divided into several lemmas. We begin by showing that every interval nest digraph admits a nest ordering.

\begin{lem}\label{lem:ida}
	Let \( D=(V,E) \) be a finite digraph. If \( D \) is an interval nest digraph then $D$ is reflexive and there exists a nest ordering on \( V \).	
\end{lem}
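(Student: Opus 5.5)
We show that any nest model yields a nest ordering, by ordering the vertices according to a representative point of their destination intervals. Fix a representation $\{I_u,J_u\}_{u\in V}$ with $J_u\subseteq I_u$ for every $u$. Reflexivity is immediate: $J_u\subseteq I_u$ gives $I_u\cap J_u=J_u\neq\emptyset$, so $uu\in E$. For the ordering, choose for each $u$ a point $p_u\in J_u$ (say the left endpoint of $J_u$). Define $u<v$ whenever $p_u<p_v$, breaking ties arbitrarily. We then check properties~(i)--(iii) directly from the model.

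The single geometric fact we use repeatedly is the following. If $u\le v$ and $uv\notin E$, then $J_v$ lies entirely to the right of $I_u$. Indeed, otherwise $J_v$ would lie to the left of $I_u$, giving $p_v\le r(J_v)<l(I_u)\le p_u$, where the last inequality holds because $p_u\in J_u\subseteq I_u$. This contradicts $p_u\le p_v$. The mirror statement also holds: if $v\le z$ and $zv\notin E$, then $J_v$ lies entirely to the left of $I_z$. Each second statement in (i)--(iii) is the reflection of the first under reversing the real line, which reverses the order and swaps arc directions. So we only argue the first statements.

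\emph{Property (i).} Suppose $uw,uz\in E$ and $uv\notin E$. By the fact above, $l(J_v)>r(I_u)$. Since $J_w$ and $J_z$ meet $I_u$, we get $l(J_w),l(J_z)\le r(I_u)<p_v\le p_w\le p_z$. Hence $J_w$ and $J_z$ both contain $p_v$, and $J_z$ also contains $p_w$. Since $p_v\in J_v\subseteq I_v$ and $J_w\subseteq I_w$, the point $p_v$ lies in $I_v\cap J_w$ and in $I_w\cap J_v$. Thus $vw\in\mathcal{S}(E)$, and the same argument gives $vz\in\mathcal{S}(E)$. Likewise $p_w\in I_w\cap J_z$ and $p_w\in J_w\cap I_z$, so $wz\in\mathcal{S}(E)$.

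\emph{Property (ii).} Suppose $uz,vw\in E$ but $uw,vz\notin E$. Applying the fact to $uw\notin E$ gives $l(J_w)>r(I_u)$, and applying it to $vz\notin E$ gives $l(J_z)>r(I_v)$. Since $I_v$ meets $J_w$, we have $r(I_v)\ge l(J_w)>r(I_u)$. Since $J_z$ meets $I_u$, we have $l(J_z)\le r(I_u)$. Combining these, $r(I_v)<l(J_z)\le r(I_u)<r(I_v)$, a contradiction.

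\emph{Property (iii).} Suppose $uw,vz\in E$ but $vw,uz\notin E$. The fact gives $l(J_w)>r(I_v)$ and $l(J_z)>r(I_u)$. From $uw\in E$ we get $r(I_u)\ge l(J_w)>r(I_v)$, and from $vz\in E$ we get $r(I_v)\ge l(J_z)>r(I_u)$. These inequalities contradict each other.

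The main obstacle is choosing the ordering. Ordering by left endpoints or by right endpoints of the $J$'s alone handles one half of each property but not its mirror. Ordering by a point $p_u$ that lies in $J_u$, and hence in $I_u$, is what makes the fact above hold in both directions. Once that fact is in place, the remaining work is the endpoint bookkeeping above, including the degenerate cases where some of $u,v,w,z$ coincide. Those cases are handled by the same inequalities, since none of the arguments uses distinctness.
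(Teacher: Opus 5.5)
Your proof is correct and is essentially the paper's: you order $V$ by points $p_u\in J_u$, use that $u\le v$ and $uv\notin E$ force $\operatorname{left}(J_v)>\operatorname{right}(I_u)$, check the first statements of (i)--(iii) with the same endpoint inequalities, and get the second statements by reflecting the model, which represents the same $D$ with the order reversed rather than reversing any arcs. The one difference is that you break ties among the $p_u$ arbitrarily, whereas the paper first perturbs the model to positive-length intervals so the points can be chosen distinct; your version is fine, since every step uses only $p_u\le p_v$ for $u\le v$, and the reversed order is likewise compatible with the reflected points $-p_u$.
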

\begin{proof}
	
	Let \( D=(V,E) \) be an interval nest digraph, hence there exists an interval representation $\{I_v, J_v\}_{v\in V}$ of $D$ such that $J_v \subseteq I_v$ for each vertex \( v \in V \). In particular, $D$ is reflexive. 
	Without loss of generality, we assume all intervals have positive length. Indeed, any nest representation can be transformed to satisfy this by the following iterative procedure: moving from left to right along the real line, for every $x$ that is both a left and a right endpoint, we perturb the representation by increasing each right endpoint equal to $x$, as well as every endpoint strictly greater than $x$, by some $\varepsilon > 0$; it is not hard to see that this operation preserves the modeled interval digraph and the property of being a nest representation.
	In such a model, we can choose arbitrary points \( p_v \in J_v \) such that all \( \{p_v\}_{v \in V} \) are distinct. 
	This defines a total order \( \leq \) on the set \( V \) by setting $u \leq v$ if and only if $p_u \leq p_v$. 
    
    We will prove that an order defined this way satisfies the first statements in \ref{item:i}, \ref{item:ii}, and \ref{item:iii}. As the second statements are symmetrical, the lemma then follows by applying this fact both to the representation $\{I_v, J_v\}_{v\in V}$ of $D$ with the order given by the points $\{p_v\}_{v\in V}$, and to the reflected representation with the order given by the reflected points (which is the inverse of the first); both represent the same digraph $D$.

    Given a bounded interval \( I \), we denote by \( \operatorname{left}(I) \) and \( \operatorname{right}(I) \) its left and right endpoints, respectively. We will use often that  for $u, v$ vertices of $V$, if $u < v$, then $uv \in E$ if and only if $\operatorname{left}(J_v) \leq \operatorname{right}(I_u)$, and $vu \in E$ if and only if $\operatorname{left}(I_v) \leq \operatorname{right}(J_u)$ (when $v < u$, the ``if'' part does not necessarily hold).

    We now verify that $\leq$ defined by the points \( \{p_v\}_{v \in V} \) 
    satisfies the first statement of~\ref{item:i}. 
    Let $u \leq v \leq w \leq z$, and suppose that \( uw, uz \in E \). If \( uv \in E \), the claim follows immediately. Thus, suppose \( uv \notin E \), which implies that  $v \neq w$,  \( I_u \cap J_v = \emptyset \), and, in particular, that \( p_v \in (\operatorname{right}(I_u), p_w) \). Since \( uw \in E \), we have that 
	$ \operatorname{left}(J_w) \leq \operatorname{right}(I_u) < p_v < p_w \leq \operatorname{right}(J_w),$
	from which it follows that \( p_v \in J_v \cap J_w \). Therefore \( I_v \cap J_w \neq \emptyset \) and \( I_w \cap J_v \neq \emptyset \), that is \( vw \in \mathcal{S}(E) \). Similarly, since \( uz \in E \), we have
	\begin{equation}\label{ib}
		\operatorname{left}(J_z) \leq \operatorname{right}(I_u) < p_v < p_w \leq p_z \leq \operatorname{right}(J_z),
	\end{equation}
	which implies that \( p_v \in J_v \cap J_z \), hence \( vz \in \mathcal{S}(E) \). Moreover, since \( p_w \in (p_v, p_z] \), it follows from \eqref{ib} that \( p_w \in J_w \cap J_z \), so \( wz \in \mathcal{S}(E)\). 

	We now show the first statement in~\ref{item:ii}. Let $u \leq v \leq w \leq z$, and suppose that \( uz, vw \in E \). If \( vz \in E \), the statement holds trivially. Suppose then that \( vz \not\in E \), which implies that \(\operatorname{right}(I_v) < \operatorname{left}(J_z).\)
	Since \( vw, uz \in E \), we have
	$\operatorname{left}(J_w)\leq \operatorname{right}(I_v)< \operatorname{left}(J_z) \leq\operatorname{right}(I_u),
	$
	which implies that \( \operatorname{left}(J_w) < \operatorname{right}(I_u) \). 
    Hence,  \( uw \in E \).
	
	Finally, let us verify the first statement of~\ref{item:iii}. Let $u \leq v \leq w \leq z$, and suppose that \( uw, vz \in E \). If \( vw \in E \), the claim follows; otherwise, assume \( vw \not\in E \).
  	Thus,
	\(
	\operatorname{right}(I_v) < \operatorname{left}(J_w),
	\)
	and since \( uw, vz \in E \), we have 
	$	\operatorname{left}(J_z) \leq \operatorname{right}(I_v) < \operatorname{left}(J_w) \leq \operatorname{right}(I_u).$
	This implies that \( \operatorname{left}(J_z) < \operatorname{right}(I_u) \), hence \( uz \in E \), completing the proof.
\end{proof}
In the remainder of this section, we will prove that if $D$ is reflexive and admits a nest ordering then $D$ is an interval nest digraph. To this end, we introduce some technical definitions that will be used in the proof.
\begin{defn}\label{def:stop}
	Given $D=(V,E)$ a reflexive digraph with a nest ordering $v_1, \dots, v_n$, we add two isolated auxiliary vertices  $v_{0}$ and $v_{n+1}$, such that  $v_0 < v_1 <  \dots < v_n < v_{n+1}$. Let $\tilde{V}= V \cup \{v_0, v_{n+1}\}$. 
    We define the \emph{stop} functions 
	$\stopmin, \stopmax:V\to \tilde{V}$ as: 
	\begin{align*}
	\stopmin(x)=\min_{z> x}\{z:z\notin N^+(x)\}, \quad  	\stopmax(x)=\max_{z< x}\{z:z\notin N^+(x)\}.  
	\end{align*}
	We define also $N^+_{\stopmisub}(x)=\{z>\stopmin(x):z\in N^+(x)\}$ and $N^+_{\stopmasub}(x)=\{z<\stopmax(x):z\in N^+(x)\}$.
\end{defn}
The inclusion of auxiliary isolated vertices ensures that both stop functions are well-defined. 
We now introduce the intervals used in our intersection model.
\begin{defn}\label{def:interval}
	Let $D=(V,E)$ be a reflexive digraph with a nest ordering. We define, for each $x \in V$, the intervals $I_x=[a_x,b_x]$, where
	$$
	a_x=\stopmax(x)+\frac{1}{2+|N^+_{\stopmasub}(x)|}\text{ and } b_x=\stopmin(x)-\frac{1}{2+|N^+_{\stopmisub}(x)|}.
	$$
	and the intervals $J_x=[\alpha_x,\beta_x]$ where
	$$
	\alpha_x=\min\{\min_{z<x}\{b_z : x\in N^+(z)\},x\}\text{ and } \beta_x=\max\{\max_{z>x}\{a_z : x\in N^+(z)\},x\}.
	$$
	 
    Here, we adopt the convention of considering $\min(\emptyset) = +\infty$ and $\max(\emptyset) = -\infty$, thus if
    $\{b_z : x\in N^+(z)\}=\emptyset$ then $\alpha_x=x$, and if 
    $\{a_z : x\in N^+(z)\}=\emptyset$ then $\beta_x=x$.
\end{defn}
Intuitively, $I_x$ is defined to span from the last non-out-neighbor to the left to the first non-out-neighbor to the right; the fractional correction guarantees strict separation between neighboring endpoints and allows to realize distinct neighborhoods between vertices with the same stop function. We observe also that $x \in J_x$ for every $x \in V$. Next, we prove that the intervals defined above are indeed contained. % as shown in the following lemma.

From now on, we consider $D=(V,E)$ a reflexive digraph, unless otherwise stated.
\begin{lem}\label{lem:contention}
	Assume that $D$ admits a nest ordering. Then, for every $x\in V$, with $J_x$ and $I_x$ defined as in Definition \ref{def:interval}, we have $J_x\subseteq I_x$.
\end{lem}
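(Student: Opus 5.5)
The claim $J_x\subseteq I_x$ amounts to the two inequalities $a_x\le\alpha_x$ and $\beta_x\le b_x$. Throughout I identify each vertex with its index in the nest ordering, so $\stopmax(x)$ and $\stopmin(x)$ are integers. By the left–right symmetry of the definition of nest ordering (the second statements in~\ref{item:i}--\ref{item:iii}), together with the symmetric definitions of $a,b,\alpha,\beta$, it suffices to prove $a_x\le\alpha_x$; the inequality $\beta_x\le b_x$ then follows by the mirrored argument.

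\emph{First term of the minimum.} Since $\stopmax(x)\le x-1$ and the fractional correction is at most $\tfrac12$, we get $a_x\le x-\tfrac12<x$. It therefore remains to show that $a_x\le b_z$ for every $z<x$ with $zx\in E$.

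\emph{Each $b_z$.} Fix such a $z$ and write $w=\stopmin(z)$. Since $zx\in E$ and $x>z$, we have $w\neq x$, which leaves two cases.

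\begin{itemize}
\item If $w>x$, then $b_z\ge w-\tfrac12\ge x+\tfrac12>a_x$, and we are done.
\item If $z<w<x$, we have $zw\notin E$ and $zx\in E$, so $x\in N^+_{\stopmisub}(z)$. In particular $|N^+_{\stopmisub}(z)|\ge1$, which gives $b_z\ge w-\tfrac13$.
\end{itemize}

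In the second case, the key step is to show that $xy\in E$ for every $y$ with $w\le y<x$; this yields $\stopmax(x)\le w-1$. I will use property~\ref{item:i}, first statement, with repeated vertices allowed, and split on whether $zy\in E$.

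\begin{itemize}
\item If $zy\notin E$, apply~\ref{item:i} to $z\le y\le x\le x$. Since $zx\in E$ and $zy\notin E$, the property forces $yx\in\mathcal{S}(E)$, so $xy\in E$.
\item If $zy\in E$, then $y\neq w$. Apply~\ref{item:i} to $z\le w\le y\le x$ with $zy,zx\in E$ and $zw\notin E$. This gives $wy,wx,yx\in\mathcal{S}(E)$, so again $xy\in E$.
\end{itemize}

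It follows that $\stopmax(x)\le w-1$, hence $a_x\le w-1+\tfrac12=w-\tfrac12<w-\tfrac13\le b_z$.

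The main obstacle is this boundary comparison in the second case. The integer parts alone only give $a_x\le w-\tfrac12$ and $b_z\ge w-\tfrac12$, which could tie. The tie is broken by the fractional corrections, using that $N^+_{\stopmisub}(z)$ is nonempty because it contains $x$. I expect the mirrored inequality $\beta_x\le b_x$ to be handled the same way: there $N^+_{\stopmasub}(z)\ni x$ for $z>x$ with $zx\in E$ and $\stopmax(z)>x$, and the argument uses the second statement of~\ref{item:i}.
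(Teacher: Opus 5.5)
Your proof is correct and is essentially the paper's argument, mirrored: the paper proves $\beta_x\le b_x$ by showing $\stopmax(y)<\stopmin(x)$ for the maximizing in-neighbour $y>x$ and then using that both fractional corrections are at most $\tfrac12$, and your direct derivation of $\stopmax(x)<\stopmin(z)$ (one application of property~(i) in each of two cases) is just a shorter version of the paper's chained contradiction argument for the same separation. One small remark: the tie you call the main obstacle is harmless, since $J_x\subseteq I_x$ only needs $a_x\le b_z$, so $a_x\le w-\tfrac12\le b_z$ already suffices and the $\tfrac13$ refinement is not needed.
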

\begin{proof}
We have to show that $a_x\leq \alpha_x$ and $\beta_x\leq b_x$. By symmetry, it suffices to prove $\beta_x\leq b_x$. Note that, if $\beta_x=x$ then, by definition, $x<\stopmin(x)$ and, since $b_x$ lies between $\stopmin(x)-\frac{1}{2}$ and $\stopmin(x)$, $\beta_x\leq b_x$ follows immediately. 
Thus, assume $\beta_x\neq x$. We must prove that 
$$\beta_x = \max_{z>x}\{a_z : x\in N^+(z)\} \leq \stopmin(x)-\frac{1}{2+|N^+_{\stopmisub}(x)|} = b_x.$$
Let $y$, with $y>x$, be the vertex that attains the maximum in $\beta_x$.  
That is,
\[
\beta_x=a_y=\stopmax(y)+\frac{1}{2+|N^+_{\stopmasub}(y)|}.
\]
We claim that $\stopmax(y) < \stopmin(x)$; assume to the contrary that $\stopmin(x)\leq \stopmax(y)$. Thus, we have $x < \stopmin(x)\leq \stopmax(y) < y$:

\begin{figure}[h]
	\centering
	\begin{tikzpicture}[scale=0.7, ->, >=stealth]
		\node (v0) at (-2,0) [circle, fill=black, inner sep=1.5pt, label=below:$x$] {};
		\node (v1) at (0,0) [circle, fill=black, inner sep=1.5pt, label=below:$\stopmin(x)$] {};
		\node (v2) at (2,0) [circle, fill=black, inner sep=1.5pt, label=below:$\stopmax(y)$] {};
		\node (v3) at (4,0) [circle, fill=black, inner sep=1.5pt, label=below:$y$] {};
		\draw (v3) to[bend right=30] (v0);
		\draw [dashed, ->] (v0) to (v1);
		\draw [dashed, ->] (v3) to (v2);
	\end{tikzpicture}
\end{figure}
Considering $x \leq x \leq \stopmax(y) < y$, since $yx \in E$, it follows from property~\ref{item:i} that either $y\stopmax(y)\in E$ or $\stopmax(y)x \in \mathcal{S}(E)$. Since $y\stopmax(y) \notin E$, then $\stopmax(y)x \in \mathcal{S}(E)$. 
Now, consider $x < \stopmin(x) \leq \stopmax(y) \leq\stopmax(y)$. Using the fact that $x\stopmax(y) \in E$, property~\ref{item:i} implies that either $x\stopmin(x) \in E$ or $\stopmax(y)\stopmin(x) \in \mathcal{S}(E)$. Since $x\stopmin(x) \not\in E$, then $\stopmin(x)\stopmax(y) \in \mathcal{S}(E)$.
From this, we can easily deduce that $y\stopmin(x) \notin E$: if $y\stopmin(x) \in E$, then applying property~\ref{item:i} to the sequence $x < \stopmin(x) \leq \stopmax(y) < y$  we obtain that either $x\stopmin(x) \in \mathcal{S}(E)$ or $y\stopmax(y) \in E$, but neither statement holds. Therefore, $y\stopmin(y) \notin E$. Consequently, we apply property  \ref{item:i} to the sequence $x \leq x < \stopmin(x) < y$, and since $yx\in E$ we obtain that either $y\stopmin(x)\in E$ or $\stopmin(x)x\in \mathcal{S}(E)$, and none of them holds. Hence, $\stopmax(y) < \stopmin(x)$, and the claim follows.
Now, we have $\stopmin(x) - \stopmax(y) \geq 1$,  $\frac{1}{2+|N^+_{\stopmasub}(y)|}\leq \frac{1}{2}$, and $\frac{1}{2+|N^+_{\stopmisub}(x)|}\leq \frac{1}{2}$. Therefore,
\vspace{-0.3mm}
\begin{align*}
	b_x-\beta_x &= \stopmin(x)-\frac{1}{2+|N^+_{\stopmisub}(x)|} - \left(\stopmax(y)+\frac{1}{2+|N^+_{\stopmasub}(y)|}\right)\\
    &= \stopmin(x)-\frac{1}{2+|N^+_{\stopmisub}(x)|} - \stopmax(y)-\frac{1}{2+|N^+_{\stopmasub}(y)|}\\
	&\geq \stopmin(x)-\frac{1}{2} - \stopmax(y)-\frac{1}{2}\\
	&\geq 1-1\geq 0.
\end{align*}
Thus $b_x \geq \beta_x$, and we conclude the proof. \end{proof}

The following lemma establishes a structural property of the neighborhoods associated with the stop functions, which will be crucial when comparing endpoints of different intervals.
\begin{lem}\label{vecindades anidadas}
Assume that $D$ admits a nest ordering. Then, for any two vertices 
$x$ and $y$, the following hold: 
\begin{enumerate}
		\item If $\stopmin(x)=\stopmin(y)$ then either $N^+_{\stopmisub}(x)\subseteq N^+_{\stopmisub}(y)$ or $N^+_{\stopmisub}(y)\subseteq N^+_{\stopmisub}(x)$.
		\item If $\stopmax(x)=\stopmax(y)$ then either $N^+_{\stopmasub}(x)\subseteq N^+_{\stopmasub}(y)$ or $N^+_{\stopmasub}(y)\subseteq N^+_{\stopmasub}(x)$.
	\end{enumerate} 
\end{lem}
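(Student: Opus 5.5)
We argue by contradiction, using only properties~\ref{item:ii} and~\ref{item:iii} of the nest ordering. The approach is to pick one out-neighbour witnessing each failed inclusion and show that the resulting four vertices form one of the forbidden configurations; the order of the two witnesses decides which property applies.

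\emph{Setup for item~1.} Let $s=\stopmin(x)=\stopmin(y)$. If $x=y$ there is nothing to prove, and the statement is symmetric in $x$ and $y$, so assume $x<y$. By definition of $\stopmin$ we have $x<s$ and $y<s$. Suppose neither inclusion holds. Then there are vertices $w\in N^+_{\stopmisub}(x)\setminus N^+_{\stopmisub}(y)$ and $z\in N^+_{\stopmisub}(y)\setminus N^+_{\stopmisub}(x)$. Both satisfy $w,z>s>y>x$, so $w\neq z$. Since $w,z>s$, membership in $N^+_{\stopmisub}(\cdot)$ is the same as being an out-neighbour. Hence
\[
xw\in E,\qquad yw\notin E,\qquad yz\in E,\qquad xz\notin E .
\]

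\emph{Case $w<z$.} We have $x<y<w<z$ with $xw,yz\in E$. The first statement of~\ref{item:iii}, applied to the sequence $(u,v,w,z)=(x,y,w,z)$, gives $yw\in E$ or $xz\in E$. Both are false, a contradiction.

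\emph{Case $z<w$.} We have $x<y<z<w$ with $xw,yz\in E$, i.e.\ the outer arc together with the inner arc. The first statement of~\ref{item:ii}, applied to $(u,v,w,z)=(x,y,z,w)$, gives $xz\in E$ or $yw\in E$. Again both are false, a contradiction.

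\emph{Item~2.} This is the mirror image. Set $s=\stopmax(x)=\stopmax(y)$ and take witnesses $w,z<s$. Now all four vertices lie in the reversed order $w,z<s<x<y$, or with $w,z$ swapped. The same two cases are settled by the second statements of~\ref{item:iii} and~\ref{item:ii}, which are exactly the reflections of the first statements.

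The main care needed is bookkeeping rather than any real obstacle. One must check that the strict inequalities $x<y<\min(w,z)$ hold, which they do since both witnesses lie beyond $s$ and both $x,y$ lie before it. One must also match each case to the property whose hypothesis pattern, outer-plus-inner or two crossing arcs, it realizes.
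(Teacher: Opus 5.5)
Your proof is correct and is essentially the paper's argument: you pick one witness for each failed inclusion beyond the common stop vertex and split on their relative order, with property (iii) ruling out the crossing configuration and property (ii) the nested one; your mirror treatment of item~2 also checks out. One small slip: $w\neq z$ does not follow from $w,z>s$, but simply from $w\in N^+_{R}(x)$ and $z\notin N^+_{R}(x)$.
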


\begin{proof} 
We will prove only the first statement, the proof of the second is analogous. 
Let \(x,y\in V\) such that \(\stopmin(x)=\stopmin(y)\). The property is symmetric with respect to $x$ and $y$, and it holds trivially if $x=y$, so we will assume without loss of generality that \(x<y\). Let \(p\in V\) be such that \(\stopmin(x)=p=\stopmin(y)\). Then, by definition, \(x<y<p\).
	
Toward a contradiction, suppose that 
$N^+_{\stopmisub}(x) \not\subseteq N^+_{\stopmisub}(y)
	\text{ and } N^+_{\stopmisub}(y) \not\subseteq N^+_{\stopmisub}(x)$.
Then there exist distinct vertices \(u,w\in V\) such that
	\[
	u\in N^+_{\stopmisub}(x),\quad u\notin N^+_{\stopmisub}(y),
	\qquad
	w\in N^+_{\stopmisub}(y),\quad w\notin N^+_{\stopmisub}(x).
	\]
	By definition, \(p<u\) and \(p<w\). Therefore, there are two possible orderings: $x < y < p < u < w$ and $x < y < p < w < u$.
	\begin{figure}[h!]
		\centering
		%--- First diagram ---
		\begin{tikzpicture}[scale=0.45, ->, >=stealth]
			\node (u0) at (-2,0) [circle, fill=black, inner sep=1.5pt, label=below:$x$] {};
			\node (u1) at (0,0) [circle, fill=black, inner sep=1.5pt, label=below:$y$] {};
			\node (u2) at (2,0) [circle, fill=black, inner sep=1.5pt, label=below:$p$] {};
			\node (u3) at (4,0) [circle, fill=black, inner sep=1.5pt, label=below:$w$] {};
			\node (u4) at (6,0) [circle, fill=black, inner sep=1.5pt, label=below:$u$] {};
			\draw (u0)  to[bend left=30] (u4);
			\draw (u1)  to[bend left=30] (u3);
			\draw [dashed, ->] (u0) to [bend left=30] (u2);
			\draw [dashed, ->] (u1) to (u2);
			\draw [dashed, ->] (u0) to [bend left=30] (u3);
			\draw [dashed, ->] (u2) to [bend left=30] (u4);
		\end{tikzpicture}
        \hspace{1cm}
		%--- Second diagram ---
		\begin{tikzpicture}[scale=0.45, ->, >=stealth]
			\node (v0) at (-2,0) [circle, fill=black, inner sep=1.5pt, label=below:$x$] {};
			\node (v1) at (0,0) [circle, fill=black, inner sep=1.5pt, label=below:$y$] {};
			\node (v2) at (2,0) [circle, fill=black, inner sep=1.5pt, label=below:$p$] {};
			\node (v3) at (4,0) [circle, fill=black, inner sep=1.5pt, label=below:$u$] {};
			\node (v4) at (6,0) [circle, fill=black, inner sep=1.5pt, label=below:$w$] {};
			\draw (v0) to[bend left=30] (v3);
			\draw [dashed, ->] (v0) to [bend left=30] (v2);
			\draw (v1) to[bend left=30] (v4);
			\draw [dashed, ->] (v1) to (v2);
			\draw [dashed, ->] (v0) to [bend left=35] (v4);
			\draw [dashed, ->] (v1) to [bend left=20] (v3);
		\end{tikzpicture}
	\end{figure}

	We show that none of the orders $x<y<w<u$ and $x<y<u<w$ satisfies the nest properties. Indeed, since \(xu\in E\) and \(yw\in E\) but neither \(yu\in E\) nor \(xw\in E\) (by the assumptions on \(u\) and \(w\)), in the first case we are contradicting property~\ref{item:ii} and in the second case we are contradicting property~\ref{item:iii}. 
	Thus, the assumption leads to a contradiction, and we conclude that either $N^+_{\stopmisub}(x)\subseteq N^+_{\stopmisub}(y)$ or $N^+_{\stopmisub}(y)\subseteq N^+_{\stopmisub}(x)$. 
\end{proof}
This final lemma establishes the equivalence between adjacency in the digraph and the intersection of the previously defined intervals.
\begin{lem}\label{lem:arc_intersection}
Assume that $D$ admits a nest ordering. Then, for every $x, y \in V$, with $I_x, J_y$ defined as in Definition~\ref{def:interval}, $xy\in E$ if and only if $I_x\cap J_y\ne\emptyset$.
\end{lem}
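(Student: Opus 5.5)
We identify each vertex with its index in the nest ordering $v_1,\dots,v_n$, so that stop values and the points $x\in J_x$ are integers. The case $x=y$ is immediate: $D$ is reflexive, and by Lemma~\ref{lem:contention} $J_x\subseteq I_x$ with $J_x\neq\emptyset$. Reflecting the model turns the construction for $x>y$ into the one for $x<y$ (with $\stopmin,\alpha,b$ exchanged with $\stopmax,\beta,a$). So I will only treat $x<y$. In that case $a_x<x<y\le \beta_y$, because $y\in J_y$ and $a_x\le \stopmax(x)+\frac12<x$. Hence $I_x\cap J_y\neq\emptyset$ if and only if $\alpha_y\le b_x$, and the lemma reduces to the claim that, for $x<y$, $xy\in E$ if and only if $\alpha_y\le b_x$.

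The forward direction is immediate from Definition~\ref{def:interval}: if $xy\in E$ with $x<y$, then $b_x$ is one of the values over which $\alpha_y$ is minimized, so $\alpha_y\le b_x$.

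For the converse, assume $xy\notin E$, so $p:=\stopmin(x)\le y$; we must show $\alpha_y>b_x$. If $\alpha_y=y$ this holds, since $b_x<p\le y$. Otherwise $\alpha_y=b_z$ for some $z<y$ with $zy\in E$, and $z\ne x$. The key step is to show $\stopmin(z)\ge p$. Suppose instead $q:=\stopmin(z)<p$, and split into cases.
\begin{itemize}
\item If $x<z$, then $x<z<q<p\le y$, and $xq\in E$ because $x<q<\stopmin(x)$. Property~\ref{item:iii} applied to $x\le z\le q\le y$ with $xq,zy\in E$ gives $zq\in E$ or $xy\in E$, a contradiction.
\item If $z<x<q$, property~\ref{item:ii} applied to $z\le x\le q\le y$ with $zy,xq\in E$ gives $zq\in E$ or $xy\in E$, a contradiction.
\item If $q=x$, property~\ref{item:i} applied to $z\le q\le y\le y$ with $zy\in E$ gives $zq\in E$ or $qy\in\mathcal S(E)$, contradicting $xy\notin E$.
\item If $z<q<x$, property~\ref{item:i} applied to $z\le x\le y\le y$ gives $zx\in E$, since $xy\notin\mathcal S(E)$. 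Then property~\ref{item:i} applied to $z\le q\le x\le y$ with $zx,zy\in E$ forces $zq\in E$ or $xy\in\mathcal S(E)$, again a contradiction.
\end{itemize}

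With $\stopmin(z)\ge\stopmin(x)$ established, finish by comparing endpoints. If $\stopmin(z)>\stopmin(x)$, then
\[
b_z\ge \stopmin(z)-\tfrac12\ge \stopmin(x)+\tfrac12>b_x .
\]
If $\stopmin(z)=\stopmin(x)=p$, then $y\in N^+_{\stopmisub}(z)$, since $y\ge p$, $y\in N^+(z)$ and $p\notin N^+(z)$. On the other hand $y\notin N^+_{\stopmisub}(x)$. By Lemma~\ref{vecindades anidadas} the two sets are nested, so $N^+_{\stopmisub}(x)\subsetneq N^+_{\stopmisub}(z)$. The strictly larger set gives a strictly smaller correction term $\frac{1}{2+|N^+_{\stopmisub}(z)|}$, hence $b_z>b_x$.

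I expect the main obstacle to be the case analysis showing $\stopmin(z)\ge \stopmin(x)$, in particular the subcase $z<q<x$. There I expect to need two successive applications of property~\ref{item:i}, with repeated vertices, to derive $zx\in E$ first. The other cases are single applications of \ref{item:ii} or \ref{item:iii}.
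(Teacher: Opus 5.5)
Your proof is correct and follows essentially the same route as the paper's. Both reduce the $x<y$ case to comparing $\alpha_y$ with $b_x$. Both rule out $\stopmin(z)<\stopmin(x)$ by a case analysis on the position of $x$, using property~\ref{item:iii} when $x<z$, property~\ref{item:ii} when $z<x<\stopmin(z)$, and two applications of property~\ref{item:i} when $\stopmin(z)<x$. Both settle equal stops with Lemma~\ref{vecindades anidadas}.

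There are only two small differences. You treat $x=\stopmin(z)$ as a separate case via property~\ref{item:i}, whereas the paper folds it into the property~\ref{item:ii} case using the loop $xx$. You also spell out the half-unit gap showing that $\stopmin(z)>\stopmin(x)$ forces $b_z>b_x$; the paper uses this implicitly when it deduces $\stopmin(m)\le\stopmin(x)$ from $b_m\le b_x$.
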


\begin{proof} For $x=y$, the statement follows from Lemma~\ref{lem:contention}, since $D$ is reflexive and $J_y \subseteq I_x$ for every $x \in V$. Since all the definitions are symmetric, we will only prove the statement for $x<y$, since the case $x > y$ is analogous. 

Suppose first that $xy\in E$. Recall that, by definition,
$\alpha_y=\min\bigl\{\min_{z<y}\{b_z : y\in N^+(z)\},\,y\bigr\}$.
Since $xy\in E$, we have $b_x\in \{b_z : y\in N^+(z)\}$, and therefore $\alpha_y\le b_x$. Since $x < y$ and, by definition, $x \in I_x$ and $y \in J_y$, this implies that $I_x \cap J_y \neq \emptyset$.

Conversely, assume that $I_x \cap J_y \neq \emptyset$. In particular, \( \alpha_y \leq b_x \). Towards a contradiction, suppose that $xy \not \in E$, thus $\stopmin(x) \leq y$. Then we have $\alpha_y \leq b_x < \stopmin(x) \leq y$, and we conclude that $\alpha_y \neq y$, hence  $\alpha_y = \min_{z<y}\{b_z : y\in N^+(z)\}$. Let $m$ be the vertex such that $\alpha_y = b_m$. Then $m < y$, $my \in E$ (by definition of $m$ and $\alpha_y$), and $b_m \leq b_x$ (because $b_m = \alpha_y \leq b_x$). 
By definition, $b_m = \stopmin(m)-\frac{1}{2+|N^+_{\stopmisub}(m)|}$ and $b_x = \stopmin(x)-\frac{1}{2+|N^+_{\stopmisub}(x)|}$, and since $b_m \leq b_x$, it follows that $\stopmin(m) \leq \stopmin(x) \leq y$. Since $my \in E$, we can deduce that $\stopmin(m) < y$ and $y \in N^+_{\stopmisub}(m)$. 

If $\stopmin(m) = \stopmin(x)$, since $y \in N^+_{\stopmisub}(m)$ and $y \not \in N^+_{\stopmisub}(x)$, by Lemma~\ref{vecindades anidadas}, $N^+_{\stopmisub}(x) \subset N^+_{\stopmisub}(m)$ and $|N^+_{\stopmisub}(x)| < |N^+_{\stopmisub}(m)|$. But this implies that 
\begin{align*}
	b_x &= \stopmin(x)-\frac{1}{2+|N^+_{\stopmisub}(x)|}\\
    &< \stopmin(x)-\frac{1}{2+|N^+_{\stopmisub}(m)|} = \stopmin(m)-\frac{1}{2+|N^+_{\stopmisub}(m)|} = b_m,
\end{align*}
a contradiction to the fact that $b_m \leq b_x$.
Therefore, $m < \stopmin(m) < \stopmin(x) \leq y$. 

We have three cases according to the position of $x$: either 
$x < m < \stopmin(m) < \stopmin(x) \leq y$, or $m < x \leq \stopmin(m) < \stopmin(x) \leq y$, or $m < \stopmin(m) < x < \stopmin(x) \leq y$. In the first two cases, since $x \leq \stopmin(m) < \stopmin(x)$, we have $x\stopmin(m) \in E$. 
In these cases, since $x\stopmin(m) \in E$, $my \in E$, $m\stopmin(m) \not \in E$, and $xy \not \in E$, the sequences $x < m < \stopmin(m) < y$ and $m < x \leq \stopmin(m) < y$ contradict~\ref{item:iii} and~\ref{item:ii}, respectively. 

	\begin{figure}[h!]
		\centering
		%--- First diagram ---
		\begin{tikzpicture}[scale=0.5, ->, >=stealth, font=\footnotesize]
			\node (x) at (-2.7,0) [circle, fill=black, inner sep=1.5pt, label=below:\phantom{(}$x$\phantom{)}] {};
			\node (m) at (0,0) [circle, fill=black, inner sep=1.5pt, label=below:\phantom{(}$m$\phantom{)}] {};
			\node (sm) at (2.7,0) [circle, fill=black, inner sep=1.5pt, label=below:$\stopmin(m)$] {};
			\node (sx) at (5.4,0) [circle, fill=black, inner sep=1.5pt, label=below:$\stopmin(x)$] {};
			\node (y) at (8.1,0) [circle, fill=black, inner sep=1.5pt, label=below:\phantom{(}$y$\phantom{)}] {};
			\draw (x) to[bend left=30] (sm);
			\draw [dashed, ->] (x) to [bend left=30] (y);
			\draw [dashed, ->] (x) to [bend left=30] (sx);
			\draw (m) to[bend left=30] (y);
			\draw [dashed, ->] (m) to (sm);
		\end{tikzpicture}
		\hspace{1cm}
		%--- Second diagram ---
		\begin{tikzpicture}[scale=0.5, ->, >=stealth, font=\footnotesize]
			\node (m) at (-2.7,0) [circle, fill=black, inner sep=1.5pt, label=below:\phantom{(}$m$\phantom{)}] {};
			\node (x) at (0,0) [circle, fill=black, inner sep=1.5pt, label=below:\phantom{(}$x$\phantom{)}] {};
			\node (sm) at (2.7,0) [circle, fill=black, inner sep=1.5pt, label=below:$\stopmin(m)$] {};
			\node (sx) at (5.4,0) [circle, fill=black, inner sep=1.5pt, label=below:$\stopmin(x)$] {};
			\node (y) at (8.1,0) [circle, fill=black, inner sep=1.5pt, label=below:\phantom{(}$y$\phantom{)}] {};
			\draw (x) to (sm);
			\draw [dashed, ->] (x) to [bend left=30] (y);
			\draw [dashed, ->] (x) to [bend left=30] (sx);
			\draw (m) to[bend left=30] (y);
			\draw [dashed, ->] (m) to [bend left=30] (sm);
		\end{tikzpicture}
    \end{figure}

In the third case, either $mx \in E$ and the sequence $m < \stopmin(m) < x < y$ contradicts~\ref{item:i}, or $mx \not \in E$ and the sequence $m < x < y \leq y$ contradicts~\ref{item:i}. 

	\begin{figure}[h!]
		\centering
		%--- First diagram ---
		\begin{tikzpicture}[scale=0.5, ->, >=stealth, font=\footnotesize]
			\node (m) at (0,0) [circle, fill=black, inner sep=1.5pt, label=below:\phantom{(}$m$\phantom{)}] {};
			\node (sm) at (3,0) [circle, fill=black, inner sep=1.5pt, label=below:$\stopmin(m)$] {};
			\node (x) at (6,0) [circle, fill=black, inner sep=1.5pt, label=below:\phantom{(}$x$\phantom{)}] {};
			\node (y) at (9,0) [circle, fill=black, inner sep=1.5pt, label=below:\phantom{(}$y$\phantom{)}] {};
			\draw (m) to[bend left=30] (x);
			\draw [dashed, ->] (x) to (y);
			\draw (m) to[bend left=30] (y);
			\draw [dashed, ->] (m) to (sm);
		\end{tikzpicture}
		\hspace{1.5cm}
		%--- Second diagram ---
		\begin{tikzpicture}[scale=0.5, ->, >=stealth, font=\footnotesize]
			\node (m) at (0,0) [circle, fill=black, inner sep=1.5pt, label=below:\phantom{(}$m$\phantom{)}] {};
			\node (x) at (3,0) [circle, fill=black, inner sep=1.5pt, label=below:\phantom{(}$x$\phantom{)}] {};
			\node (y) at (6,0) [circle, fill=black, inner sep=1.5pt, label=below:\phantom{(}$y$\phantom{)}] {};
			\node (yy) at (9,0) [circle, fill=black, inner sep=1.5pt, label=below:\phantom{(}$y$\phantom{)}] {};
			\draw [dashed, ->] (m) to  (x);
			\draw [dashed, ->] (x) to (y);
			\draw [dashed, ->] (x) to [bend left=30] (yy);
			\draw (m) to[bend left=30] (y);
            \draw (m) to[bend left=30] (yy);
		\end{tikzpicture}
    \end{figure}

Since we have reached a contradiction in any case, we can conclude that  $xy \in E$, as desired. 
\end{proof}
\begin{proof}[Proof of Theorem~\ref{characterization}]
The preceding lemmas establish both directions of the correspondence between nest orderings and the interval representation of a nest digraph. First, we showed in Lemma~\ref{lem:ida} that every interval nest digraph admits a nest ordering. Then, given a reflexive digraph $D = (V,E)$ admitting a nest ordering, we define a family of intervals $\{I_x,J_x\}_{x\in V}$. We prove in Lemma~\ref{lem:contention} the containment \(J_x \subseteq I_x\) for every $x\in V$, and we prove in Lemma~\ref{lem:arc_intersection} (using Lemma~\ref{lem:contention} and Lemma~\ref{vecindades anidadas}) that, for $x,y \in V$, $xy \in E$ if and only if $I_x \cap J_y \neq \emptyset$. Together, these results allow us to conclude the full equivalence between interval nest digraphs and reflexive digraphs admitting a nest ordering.
\end{proof}

Our last result characterizes interval nest digraphs by forbidden patterns, showing that the difference with the forbidden patterns for reflexive interval digraphs (Figure~\ref{fig:HFJ-patrones-reflexive}) are just one pattern and its symmetrical one ((g) and~(h) in Figure~\ref{fig:fbd-patt}).

\begin{thm}\label{thm:patt-char}
A digraph is an interval nest digraph if and only if its vertex set admits a linear ordering in which none of the forbidden patterns illustrated in Figure~\ref{fig:fbd-patt} occur.    
\end{thm}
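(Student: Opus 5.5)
We derive the statement from Theorem~\ref{characterization}: it suffices to show that, for a fixed linear ordering $\leq$ of $V$, the ordering avoids all patterns of Figure~\ref{fig:fbd-patt} if and only if $D$ is reflexive and $\leq$ is a nest ordering. We take Figure~\ref{fig:fbd-patt} to consist of the dashed loop, the six patterns (a)--(f) of Figure~\ref{fig:HFJ-patrones-reflexive} with the same coincidences allowed, and the new patterns (g) and~(h). Pattern (g) encodes a failure of the first statement of~\ref{item:i}: arcs $uw,uz$, non-arc $uv$, and some non-arc among $vw,wv,vz,zv,wz,zw$. Pattern (h) is its mirror image, encoding the second statement of~\ref{item:i}. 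Under this reading the dashed loop corresponds to reflexivity, and the remaining patterns must account for properties~\ref{item:ii} and~\ref{item:iii}.

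\textbf{Properties~\ref{item:ii} and~\ref{item:iii}.} We match each implication with a forbidden pattern by reading it contrapositively.
\begin{itemize}
\item The first statement of~\ref{item:ii} is violated exactly when $uz,vw\in E$ and $uw,vz\notin E$ with $u\le v\le w\le z$. This is pattern (b) of Figure~\ref{fig:HFJ-patrones-reflexive}.
\item The first statement of~\ref{item:iii} is violated exactly when $uw,vz\in E$ and $vw,uz\notin E$. This is pattern (c).
\item The second statements of~\ref{item:ii} and~\ref{item:iii} correspond in the same way to (e) and (f).
\end{itemize}
We then check degenerate coincidences. When vertices repeat, reflexivity makes some arcs automatic: for example, $u=v$ in~\ref{item:ii} forces $uw\in E$. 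So the coincidences allowed in the figure are exactly the non-trivial ones, and the equivalence survives them.

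\textbf{Patterns (a) and (d).} Pattern (a) has $uz\in E$ and $uv,wz\notin E$ with $u<v\le w<z$. It is not literally one of the nest conditions, so it needs a short argument.
\begin{itemize}
\item To show a nest ordering avoids (a), apply property~\ref{item:i} to $u\le v\le z\le z$, using $uz\in E$ (twice) and $uv\notin E$. This gives $vz\in\mathcal S(E)$. Then apply~\ref{item:i} to $u\le v\le w\le z$ with $uz\in E$; we still need $uw\in E$. Get it from~\ref{item:ii} on $u\le v\le w\le z$ with the pair $uz,vz$, or reason directly as in the proof of Lemma~\ref{lem:contention}. The conclusion $wz\in\mathcal S(E)$ then contradicts $wz\notin E$.
\item Conversely, (a) is excluded from the list by exactly this derivation, so avoiding it is automatic for nest orderings.
\item Pattern (d) is handled by the mirror argument.
\end{itemize}

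\textbf{Completing the equivalence.}
\begin{itemize}
\item ($\Rightarrow$) Avoiding the dashed loop, (b), (c), (e), (f), (g), (h) gives reflexivity together with~\ref{item:i}--\ref{item:iii}. So $D$ has a nest ordering, and Theorem~\ref{characterization} shows $D$ is an interval nest digraph.
\item ($\Leftarrow$) Start from the ordering given by Lemma~\ref{lem:ida}. It is reflexive and a nest ordering, so it avoids all listed patterns, including (a) and (d) by the derivation above.
\end{itemize}
Alternatively, for (a)--(f) one can note that interval nest digraphs are reflexive interval digraphs. However, the ordering of Francis--Hell--Jacob need not coincide with ours, so we prefer the direct derivation from the nest properties.

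\textbf{Main obstacle.} The delicate step is the exact bookkeeping of which coincidences among $u,v,w,z$ each pattern permits in Figure~\ref{fig:fbd-patt}. The nest definition allows all repetitions, while the patterns are drawn with mostly distinct vertices. I would first enumerate, for each of (g) and (h), the cases $v=w$ and $w=z$, and verify that reflexivity either makes them vacuous or reduces them to an instance the figure lists. I expect the coincidence $u=v$ in~\ref{item:i} to be vacuous, since then $uv$ is a loop and hence present.
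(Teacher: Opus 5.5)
Your reduction to Theorem~\ref{characterization} matches the paper's: for a fixed ordering, you show that avoiding the patterns is equivalent to being reflexive with a nest ordering. Your handling of the loop and of (b), (c), (e), (f) also agrees with the paper. But there is a genuine gap, caused by misreading patterns (g) and (h).

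In Figure~\ref{fig:fbd-patt}, (g) is a single pattern: $uz\in E$, $uv\notin E$, $zw\notin E$, with possibly $v=w$. It is pattern (a) with the dashed arc $wz$ reversed. Pattern (h) is its mirror image: $zu\in E$, $uv\notin E$, $zw\notin E$. The paper itself calls the new patterns ``just one pattern and its symmetrical one''. A pattern is a conjunction of present and absent arcs, so it cannot say ``some non-arc among $vw,wv,vz,zv,wz,zw$''. A failure of~\ref{item:i} is therefore not one pattern.

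Consequently your ($\Rightarrow$) direction never establishes property~\ref{item:i}. Declaring (a) and (d) redundant discards exactly what that direction needs. The missing argument is a case analysis using (a) and (g) together:
\begin{enumerate}
\item Take $u\le v\le w\le z$ with $uw,uz\in E$ and $uv\notin E$. Reflexivity gives $u\neq v$.
\item If $wz\notin E$ or $zw\notin E$, then $w\neq z$, and (a) or (g) occurs on $u<v\le w<z$.
\item If $vz\notin E$ or $zv\notin E$, then (a) or (g) occurs on $u<v=v<z$.
\item If $vw\notin E$ or $wv\notin E$, then (a) or (g) occurs on $u<v=v<w$, using $uw\in E$ as the solid arc.
\end{enumerate}
The second statement of~\ref{item:i} follows symmetrically from (d) and (h). This is where the coincidence $v=w$ allowed in (a) and (g) is essential.

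For the ($\Leftarrow$) direction you must also show that a nest ordering avoids the actual (g) and (h). Your argument for (a) would give this, since it ends with $wz\in\mathcal{S}(E)$. However, one step fails: property~\ref{item:ii} has hypotheses $uz,vw\in E$, so it cannot be applied to the pair $uz,vz$ to produce $uw\in E$. Replace that step with a case split on $uw$:
\begin{enumerate}
\item If $uw\in E$ (so $v<w$), property~\ref{item:i} on $u<v<w<z$ forces $wz\in\mathcal{S}(E)$, because $uv\notin E$.
\item If $uw\notin E$, property~\ref{item:i} on $u<w<z\le z$, with $uz$ used twice, forces $wz\in\mathcal{S}(E)$.
\end{enumerate}
Either way both $wz$ and $zw$ are arcs, which excludes (a) and (g) at once. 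In the mirror argument, splitting on $zv$ gives $uv\in\mathcal{S}(E)$, which excludes (d) and (h). With these two repairs your proof becomes essentially the paper's.
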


\begin{proof} By Theorem~\ref{characterization}, a digraph is an interval nest digraph if and only if it is reflexive and admits a nest ordering.  
Let us first suppose that $D = (V,E)$ is reflexive and admits a nest vertex ordering $\leq$. Since $D$ is reflexive, pattern~(j) cannot appear.
Patterns~(b) and~(e) violate, in a straightforward way, property~\ref{item:ii} in the definition of nest ordering. The same occurs for patterns~(c) and~(f) with property~\ref{item:iii}. For patterns~(a) and~(g), if $uw \in E$, then $u < v < w < z$ violate property~\ref{item:i}, and if $uw \not \in E$ then $u < w < z \leq z$ violate property~\ref{item:i} as well. For patterns~(d) and~(h), a similar argument can be applied with respect to the existence of the arc $zv$.  

Let us now suppose that $\leq$ is a linear vertex ordering of $D=(V,E)$ in which none of the forbidden structural patterns illustrated in Figure~\ref{fig:fbd-patt} occur. Since pattern~(j) does not appear, $D$ is reflexive. Let us verify that $\leq$ is also a nest ordering. 

Suppose first that the first statement of property~\ref{item:ii} does not hold. Then we have $u \leq v \leq w \leq z$ such that $uz, vw \in E$ and neither $uw \in E$ nor $vz \in E$. In particular, $u \neq v$ because $vw \in E$ and $uw \not \in E$, and $w \neq z$ since $vw \in E$ and $vz \not \in E$. Thus, pattern~(b) occurs, a contradiction. The proof for the second statement is symmetric, using  pattern~(e). 

Suppose now that the first statement of property~\ref{item:iii} does not hold. Then we have $u \leq v \leq w \leq z$ such that $uw, vz \in E$ and neither $vw \in E$ nor $uz \in E$. In particular, $u \neq v$ since $vw \not \in E$ and $uw \in E$, and $w \neq z$ because $vw \not \in E$ and $vz \in E$. Thus, pattern~(c) occurs, a contradiction. The proof for the second statement is symmetric, using  pattern~(f). 

Finally, suppose now that the first statement of property~\ref{item:i} does not hold. Then we have $u \leq v \leq w \leq z$ such that $uw, uz \in E$, $uv \not \in E$, and at least one of $vw$, $vz$, $wz$, $wv$, $zv$, $zw$ is not an arc. In particular, $u \neq v$ since $uv \not \in E$ and the pattern~(j) does not appear. If either $wz \not \in E$ or $zw \not \in E$ (in particular, $w \neq z$ because the pattern~(j) does not appear), then either pattern~(a) or pattern~(g) appears, leading to a contradiction. Thus $wz \in \mathcal{S}(E)$.  

If either $vz \not \in E$ or $zv \not \in E$ (in particular, $v \neq z$ because the pattern~(j) does not appear), then either pattern~(a) or pattern~(g) appears considering vertices $u < v = v < z$, leading to a contradiction. Thus $vz \in \mathcal{S}(E)$. 

Recall that $uw \in E$, so if either $vw \not \in E$ or $wv \not \in E$ (in particular, $v \neq w$ because the pattern~(j) does not appear), then either pattern~(a) or pattern~(g) appears considering vertices $u < v = v < w$, leading to a contradiction. Thus $vw \in \mathcal{S}(E)$ as well, and the statement holds. 

The proof for the second statement is symmetric, using  patterns~(d) and~(h). \end{proof}

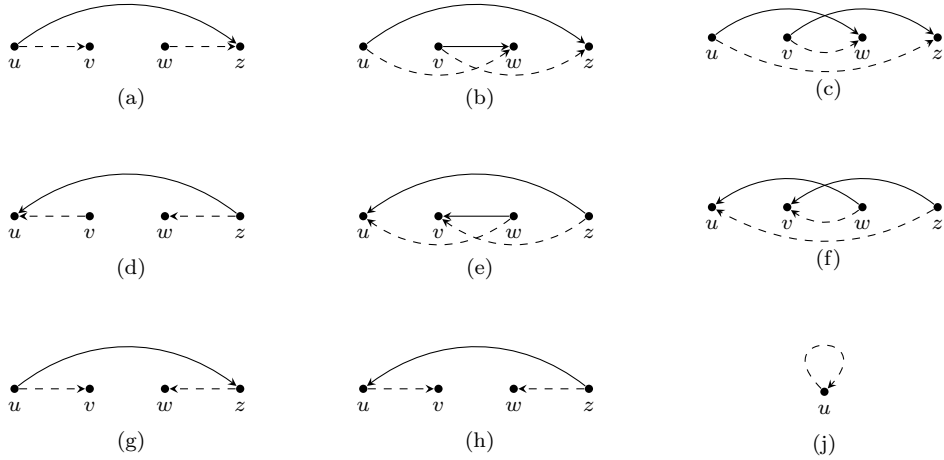
\begin{figure}[h]
\centering
\begin{tikzpicture}[font=\scriptsize]
\matrix[row sep=3mm, column sep=10mm]{

\node{\begin{tikzpicture}[scale=0.5, ->, >=stealth]
\node (v0) at (-2,0) [circle, fill=black, inner sep=1.1pt, label=below:$u$] {};
\node (v1) at (0,0)  [circle, fill=black, inner sep=1.1pt, label=below:$v$] {};
\node (v2) at (2,0)  [circle, fill=black, inner sep=1.1pt, label=below:$w$] {};
\node (v3) at (4,0)  [circle, fill=black, inner sep=1.1pt, label=below:$z$] {};
\draw (v0) to[bend left=38] (v3);
\draw [dashed, ->] (v0) to (v1);
\draw [dashed, ->] (v2) to (v3);
\node at (1.1,-1.4) {(a)};
\end{tikzpicture}
}; &

\node{
\begin{tikzpicture}[scale=0.5, ->, >=stealth]
\node (v0) at (-2,0) [circle, fill=black, inner sep=1.1pt, label=below:$u$] {};
\node (v1) at (0,0)  [circle, fill=black, inner sep=1.1pt, label=below:$v$] {};
\node (v2) at (2,0)  [circle, fill=black, inner sep=1.1pt, label=below:$w$] {};
\node (v3) at (4,0)  [circle, fill=black, inner sep=1.1pt, label=below:$z$] {};
\draw (v0) to[bend left=38] (v3);
\draw (v1) to  (v2);
\draw [dashed, ->] (v0) to[bend right=38](v2);
\draw [dashed, ->] (v1) to[bend right=38](v3);
\node at (1.1,-1.4) {(b)};
\end{tikzpicture}
}; & 

\node{
\begin{tikzpicture}[scale=0.5, ->, >=stealth]
\node (v0) at (-2,0) [circle, fill=black, inner sep=1.1pt, label=below:$u$] {};
\node (v1) at (0,0)  [circle, fill=black, inner sep=1.1pt, label=below:$v$] {};
\node (v2) at (2,0)  [circle, fill=black, inner sep=1.1pt, label=below:$w$] {};
\node (v3) at (4,0)  [circle, fill=black, inner sep=1.1pt, label=below:$z$] {};
\draw (v1) to[bend left=38] (v3);
\draw (v0) to[bend left=38] (v2);
\draw [dashed, ->] (v0) to[bend right=30] (v3);
\draw [dashed, ->] (v1) to[bend right=38] (v2);
\node at (1.1,-1.4) {(c)};
\end{tikzpicture}
}; \\

%%%%

\node{\begin{tikzpicture}[scale=0.5, ->, >=stealth]
\node (v0) at (-2,0) [circle, fill=black, inner sep=1.1pt, label=below:$u$] {};
\node (v1) at (0,0)  [circle, fill=black, inner sep=1.1pt, label=below:$v$] {};
\node (v2) at (2,0)  [circle, fill=black, inner sep=1.1pt, label=below:$w$] {};
\node (v3) at (4,0)  [circle, fill=black, inner sep=1.1pt, label=below:$z$] {};
\draw (v3) to[bend right=38] (v0);
\draw [dashed, ->] (v1) to (v0);
\draw [dashed, ->] (v3) to (v2);
\node at (1.1,-1.4) {(d)};
\end{tikzpicture}
}; &

\node{
\begin{tikzpicture}[scale=0.5, ->, >=stealth]
\node (v0) at (-2,0) [circle, fill=black, inner sep=1.1pt, label=below:$u$] {};
\node (v1) at (0,0)  [circle, fill=black, inner sep=1.1pt, label=below:$v$] {};
\node (v2) at (2,0)  [circle, fill=black, inner sep=1.1pt, label=below:$w$] {};
\node (v3) at (4,0)  [circle, fill=black, inner sep=1.1pt, label=below:$z$] {};
\draw (v3) to[bend right=38] (v0);
\draw (v2) to  (v1);
\draw [dashed, ->] (v2) to[bend left=38](v0);
\draw [dashed, ->] (v3) to[bend left=38](v1);
\node at (1.1,-1.4) {(e)};
\end{tikzpicture}
}; & 

\node{
\begin{tikzpicture}[scale=0.5, ->, >=stealth]
\node (v0) at (-2,0) [circle, fill=black, inner sep=1.1pt, label=below:$u$] {};
\node (v1) at (0,0)  [circle, fill=black, inner sep=1.1pt, label=below:$v$] {};
\node (v2) at (2,0)  [circle, fill=black, inner sep=1.1pt, label=below:$w$] {};
\node (v3) at (4,0)  [circle, fill=black, inner sep=1.1pt, label=below:$z$] {};
\draw (v3) to[bend right=38] (v1);
\draw (v2) to[bend right=38] (v0);
\draw [dashed, ->] (v3) to[bend left=30] (v0);
\draw [dashed, ->] (v2) to[bend left=38] (v1);
\node at (1.1,-1.4) {(f)};
\end{tikzpicture}
}; \\

\node{\begin{tikzpicture}[scale=0.5, ->, >=stealth]
\node (v0) at (-2,0) [circle, fill=black, inner sep=1.1pt, label=below:$u$] {};
\node (v1) at (0,0)  [circle, fill=black, inner sep=1.1pt, label=below:$v$] {};
\node (v2) at (2,0)  [circle, fill=black, inner sep=1.1pt, label=below:$w$] {};
\node (v3) at (4,0)  [circle, fill=black, inner sep=1.1pt, label=below:$z$] {};
\draw (v0) to[bend left=38] (v3);
\draw [dashed, ->] (v0) to (v1);
\draw [dashed, ->] (v3) to (v2);
\node at (1.1,-1.4) {(g)};
\end{tikzpicture}
}; &

\node{\begin{tikzpicture}[scale=0.5, ->, >=stealth]
\node (v0) at (-2,0) [circle, fill=black, inner sep=1.1pt, label=below:$u$] {};
\node (v1) at (0,0)  [circle, fill=black, inner sep=1.1pt, label=below:$v$] {};
\node (v2) at (2,0)  [circle, fill=black, inner sep=1.1pt, label=below:$w$] {};
\node (v3) at (4,0)  [circle, fill=black, inner sep=1.1pt, label=below:$z$] {};
\draw (v3) to[bend right=38] (v0);
\draw [dashed, ->] (v0) to (v1);
\draw [dashed, ->] (v3) to (v2);
\node at (1.1,-1.4) {(h)};
\end{tikzpicture}
}; & 

\node{\begin{tikzpicture}[scale=0.5, ->, >=stealth]
\node (v0) at (-2,0) [circle, fill=black, inner sep=1.1pt, label=below:$u$] {};
\draw [->, dashed,scale=3] (v0) edge [out=135,in=45,looseness=32] (v0);
\node at (-2,-1.4) {(j)};
\end{tikzpicture}
}; \\
};
\end{tikzpicture}

\caption{Forbidden patterns characterizing interval nest digraphs, where possibly $v = w$ in~(a), (b), (d), (e), (g) and~(h).}
\label{fig:fbd-patt}
\end{figure}

\section{Conclusions and Future Work}

The class of interval digraphs has been extensively studied, and several of its subclasses admit characterizations based on vertex orderings. 
The characterization of interval nest digraphs presented in this work fills a missing gap in this line of research and provides a new structural understanding of this subclass. In particular, it establishes the theoretical foundation upon which efficient recognition algorithms can be developed, analogous to those already known for interval digraphs and all their other mentioned subclasses~\cite{Muller97,Prisner1994,Takaoka21}. To the best of our knowledge and as reported in the cited literature, no polynomial time recognition algorithm is known for the class of interval nest digraphs.

Future research may explore the existence of efficient recognition algorithms based on nest orderings, the identification of minimal forbidden subdigraphs for this class, and possible connections between interval nest digraphs and other families of digraphs, thereby contributing to a broader understanding of the algorithmic and structural landscape of these classes.

\section*{Acknowledgments} 

This work was partially supported by  UBACyT (20020220300079BA and 20020190100126BA), CONICET (PIP 11220200100084CO), and ANPCyT (PICT-2021-I-A-00755). We would like to thank Eric Brandwein for fruitful discussions about the presentation of the proof of the main theorem.

\end{document}